\newcommand{\I}{\mathcal{I}}
\newcommand{\D}{\mathcal{D}}
\newcommand{\N}{\mathbb{N}}
\newcommand{\Q}{\mathcal{Q}}
\newcommand{\R}{\mathbb{R}}
\newcommand{\Z}{\mathbb{Z}}
\DeclareMathOperator\diam{diam}
\newtheorem{lemma}{Lemma}
\newtheorem{theorem}{Theorem}
\newtheorem{proposition}{Proposition}
\begin{document}
\title[Accessible values of Assouad dimension]{Accessible values of Assouad and the lower dimensions of subsets}
\author[C.  Chen \and M. Wu \and W. Wu]{Changhao Chen \and Meng Wu \and Wen Wu}
\address[C. Chen]{Department of Mathematical Sciences, University of Oulu, P.O. Box 3000, 90014 Oulu, Finland. }\email{changhao.chen@oulu.fi}
\address[M. Wu]{Department of Mathematical Sciences, University of Oulu, P.O. Box 3000, 90014 Oulu, Finland.}\email{meng.wu@oulu.fi}
\address[W. Wu]{ Faculty of Mathematics and Statistics, Hubei University, 430062 Wuhan, P.R. China\hfill\newline
\indent Department of Mathematical Sciences, University of Oulu, P.O. Box 3000, 90014 Oulu, Finland. }\email{hust.wuwen@gmail.com}

\subjclass[2010]{28A80, 28A05.}
\keywords{Assouad dimension, Lower dimension}

\thanks{We acknowledge the support of Academy of Finland, the Centre of Excellence in Analysis and Dynamics Research. Changhao Chen acknowledges the support of the Vilho, Yrj\"o, and Kalle V\"ais\"al\"a foundation. Wen Wu was also supported by NSFC grant nos. 11401188. Wen Wu is the corresponding author.}

\begin{abstract}
Let $E$ be a subset of a doubling metric space $(X,d)$. We prove that for any   $s\in [0, \dim_{A}E]$, where $\dim_{A}$ denotes the Assouad dimension, there exists a subset $F$ of $E$ such that $\dim_{A}F=s$. We also show that the same statement holds for  the lower  dimension $\dim_L$. 
\end{abstract}

\maketitle
\section{Introduction}

Recently, there have been many works devoted to the study of Assouad and the lower dimensions in fractal geometry, see e.g. \cite{Fraser,FHOR,FO,KLV,KR, Luukkainen,Mackay}. Most of those works concentrated on calculations of Assouad (or the lower) dimension of some self-similar or self-affine sets. In the present paper, we propose to consider some basic properties of Assoud and the lower dimensions. More specifically, we would like to investigate, for a given subset $E$ of a metric space $(X,\rho)$, the accessible values of  $\dim_{A}F$ and $\dim_{L}F$ for subsets $F\subset E$. 
The problem we consider is well understood in the classical cases of Hausdorff, packing and box-counting dimensions. 
Let $E\subset \mathbb{R}^d$ with Hausdorff dimension $s$. Then for any $\alpha \in (0,s)$ there is a  subset $F \subset E$ with Hausdorff dimension $\alpha,$ see Besicovitch \cite{Besicovitch} and Davies \cite{Davies}. 
This accessibility property also holds for packing and upper box-counting dimensions, see Joyce and Preiss \cite{Joycepreiss} for the packing dimension and Feng, Wen and Wu \cite{FWW} for the upper box-counting dimension. However, the lower box-counting dimension does not possess the accessibility property, see \cite{FWW}. For more detailed information about these dimensions, see \cite{Falconer, Mattila}.

In this paper, we show that in a doubling metric space,  the Assouad  and the lower dimensions also have the above accessibility property. 
In the rest of this introduction, we recall the definitions of Assouad and the lower dimensions and state our main results.

The Assouad dimension was introduced by Assouad, see \cite{Assouad1,Assouad2}. Let $(X,\rho)$ be a doubling metric space. Recall that a metric space $X$ is a doubling metric space, if there is $N\in \mathbb{N}$ such that for all $r>0$, any ball of radius $r$ can be covered by a collection
of $N$ balls of radius $r/2$. For $r>0$ and $ E\subset X$, let $N_{r}(E)$ denote the least number of open balls of radius less than or equal to $r$ which can cover the set $E$. The (local) \emph{Assouad 
dimension} of $E \subset X$ is defined as 
\begin{align*}
\dim_{A}^* E=\inf \Big\{s \geq 0  : \exists ~C,\rho>0  & \text{ such that } \forall ~ 0<r<R<\rho,   \\
& \sup_{x \in E}N_r \left(E \cap B(x, R) \right) \leq C \left(\frac{R}{r} \right)^s\Big\}.
\end{align*}
It is clear that the (local) Assouad dimension depends only on the local structure of sets. It is not suitable for the measurement of large scale structures.  Because of this, we introduce the (global) \emph{Assouad dimension} which is defined as
\begin{align*}
\dim_A E=\inf \Big\{s \geq 0  : \exists ~ C>0  &\text{ such that }  \forall ~ 0<r<R,   \\
& \sup_{x \in E}N_r \left(E \cap B(x, R) \right) \leq C \left(\frac{R}{r} \right)^s\Big\}.
\end{align*}
A metric space $X$ is doubling if and only if $\dim_A X< \infty$, see \cite[Proposition 1.15]{Heinonen}. 

Note that for any bounded set $E$, $\dim_A^* E=\dim_A E$. In general, we have $\dim^*_A E \leq \dim_A E$ for any set $E$.  The equality does not necessarily hold  for unbounded sets even in some nice doubling metric space, such as  $\R^d$. For example, the integer lattice $\Z^d$ in $\R^d$ has (local) Assouad dimension 0, while  its (global) Assouad dimension is $d$. Actually, for any $0\leq \alpha<\beta\leq d$, we can find a subset $E\subset\mathbb{R}^{d}$ such that $\alpha=\dim^{*}_{A}E$ and $\dim_{A}E=\beta$ (see Section \ref{section remarks-example}).  

The \emph{lower dimension} can be considered as the dual of Assouad dimension. 
It is defined as follows: 
\begin{align*}
\dim_{L}E  = \sup\Big\{ s \geq 0:\exists ~C,\rho>0 &\textrm{ such that }  \forall ~0<r<R <\rho,\\ 
& \inf_{x\in E}N_{r}(E\cap B(x,R))\geq C\left(R/r\right)^{s}
\Big\}.
\end{align*}
The lower dimension was introduced by Larman, see \cite{Larman}.

 The following Theorem is our main result. 

\begin{theorem}\label{Assouad}
Let $(X,\rho)$ be a doubling metric space and $E\subset X$. For any $\alpha \in [0, \dim_A^* E]$ there exists a subset $F \subset E$ with $\dim_A^* F =\alpha.$ The same result also holds for $\dim_A$ and $\dim_L$.
\end{theorem}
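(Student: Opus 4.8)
The plan is to isolate a single extraction lemma, deduce everything from it by gluing, and treat $\dim_L$ by a dual (and technically simpler) argument. Since $X$ is doubling, $\dim_A^* E\le\dim_A E\le\dim_A X<\infty$, so we may assume $0<\alpha<\dim_A^* E$; the cases $\alpha=0$ (take $F$ a single point) and $\alpha=\dim_A^* E$ (take $F=E$) are immediate, and the analogous reductions apply for $\dim_A$ and $\dim_L$.

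\textbf{Extraction lemma.} The heart of the matter is to show: if $\dim_A^* E>\alpha$, then for every $\varepsilon>0$ and every $\rho_0>0$ there are scales $0<r<R<\rho_0$ with $R/r$ arbitrarily large, a point $x\in E$, and a finite set $G\subset E\cap B(x,R)$ which is \emph{$(\alpha,\varepsilon)$-uniform} from $r$ to $R$: $G$ is $r$-separated, $\#G\ge (R/r)^{\alpha-\varepsilon}$, and $\#\big(G\cap B(y,\rho)\big)\le C(\rho/r)^{\alpha+\varepsilon}$ for all $y\in X$ and all $r\le\rho\le R$, where $C$ depends only on the doubling constant. To prove it, fix $t$ with $\alpha<t<\dim_A^* E$; by definition of $\dim_A^*$ there are arbitrarily small $R$ and $r<R$ and $x\in E$ with $N_r(E\cap B(x,R))$ enormously larger than $(R/r)^t$. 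Organising a maximal $r$-separated subset of $E\cap B(x,R)$ into a cover tree along dyadic scales (branching bounded by the doubling constant at every level), one wishes to prune so that every retained node has essentially $\lceil 2^\alpha\rceil$ children, which would produce $G$.

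\textbf{Main obstacle.} The naive pruning fails exactly when the cover tree contains long ``pipes'', i.e.\ ranges of scales across which $E\cap B(x,R)$ is concentrated inside one much smaller sub-ball; then a $2^\alpha$-branching subtree has far fewer than $(R/r)^{\alpha-\varepsilon}$ leaves. The remedy is not to fix the scale ratio in advance: inside the cover tree of a $t$-rich configuration one first passes, by a stopping-time descent, to a descendant node whose subtree is \emph{$t'$-thick} for some $t'$ close to $t$ (every node $w$ below it has at least $c\,2^{t'\cdot(\mathrm{depth\ below\ }w)}$ leaves beneath it); the enormous richness $\gg(R/r)^t$ at the root guarantees that the selected subtree still has depth $\to\infty$. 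A $t'$-thick tree has no pipes, its branching is genuinely spread out, and $2^\alpha$-pruning now yields the $(\alpha,\varepsilon)$-uniform set $G$. This thickness argument is where the real work lies.

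\textbf{Gluing and conclusion.} Choosing $\varepsilon_k\downarrow0$ and applying the lemma one obtains $(\alpha,\varepsilon_k)$-uniform sets $G_k\subset E\cap B(x_k,R_k)$ with $R_k\downarrow0$ and $R_k/r_k\uparrow\infty$ very fast, and puts $F=\bigcup_k G_k$; passing to a subsequence with $R_{k+1}$ negligible compared to $r_k$ ensures that at any scale only boundedly many pieces $G_k$ are relevant to the covering numbers of $F$ and that distinct pieces do not interfere. For $\dim_A^* F\ge\alpha$ one uses each $G_k$ alone: given $t<\alpha$, taking $R=2R_k$, $r=r_k$ and $y\in G_k$ yields $N_r(F\cap B(y,R))\ge c\,\#G_k\gg(R/r)^t$ once $k$ is large. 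For $\dim_A^* F\le\alpha$, given $t>\alpha$ one checks that a small ball $B(y,R)$ with $y\in F$ meets essentially one ``active'' $G_k$ at scale $r$, so that by uniformity of $G_k$ and the rapid decay of the $R_k$ (coarser pieces contribute boundedly, finer pieces are $r_k$-separated hence contribute at most $\#G_k\le(R_k/r_k)^{\alpha+\varepsilon_k}\le(R/r)^t$) one obtains $N_r(F\cap B(y,R))\le C(R/r)^t$. The statement for $\dim_A$ follows from the same scheme using the global version of the lemma (unbounded scale ratios, which the tree argument equally supplies) together with control of the large-scale behaviour of the union, e.g.\ by arranging $F$ to lie in a bounded region. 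Finally, for $\dim_L$: here $\dim_L E>\alpha$ furnishes a \emph{uniform} lower bound $N_r(E\cap B(x,R))\ge c(R/r)^s$ with $s>\alpha$ valid for every $x\in E$ and all $r<R<\rho$, which by itself rules out pipes, so one thins $E$ directly --- scale by scale from $\rho$ downward, keeping roughly $2^\alpha$ children at each node --- to a subset $F$ that is $(\alpha,o(1))$-uniform at every location and scale; then $\dim_L F\ge\alpha$ by construction, while $\dim_L F\le\alpha$ because such an $F$ satisfies $N_r(F\cap B(x,R))\le C(R/r)^{\alpha+o(1)}$, which defeats every exponent $s'>\alpha$.
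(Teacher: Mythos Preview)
Your extraction lemma and its proof sketch (cover trees, the ``pipe'' obstruction, and the stopping-time passage to a $t'$-thick subtree) are a legitimate alternative to what the paper does. The paper first passes to Euclidean space via Assouad embedding, works with $b$-adic cubes, and obtains the analogue of your lemma by a short probabilistic pruning (Lemma~\ref{lem:rc}): the hypothesis $N_M(\widetilde K,Q)\le M^{s+\varepsilon}$ for all intermediate cubes $Q$, which comes for free from $\dim^* E\le s+\varepsilon$, already rules out pipes, so no thickness argument is needed there. Your route is more intrinsic but correspondingly heavier.

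The genuine gap is in the gluing for the upper bound $\dim_A^* F\le\alpha$. You assert that ``coarser pieces contribute boundedly'' and that ``distinct pieces do not interfere'' once $R_{k+1}\ll r_k$, but this constraint is purely on scales, not on locations: nothing in your construction prevents the centres $x_k$ from accumulating. If they do, then for $y\in G_m$ with $m$ large and $R$ small, the ball $B(y,R)$ can meet $G_1,\dots,G_{m-1}$ (each contributing $O_d(1)$ points since $r_j>R$), giving $N_r(F\cap B(y,R))\gtrsim m$ for $r$ close to $R$; no uniform constant $C$ in $N_r\le C(R/r)^t$ survives this. The paper avoids exactly this issue by \emph{nesting} the pieces along a decreasing chain of full-dimensional cubes $Q_{n_k}$ (obtained from finite stability), so that at any level all later pieces sit inside a single small cube and contribute $O(1)$ to $N_M$; and crucially it does \emph{not} aim for $\dim^* F=\alpha$ in one shot but only for $\dim^* F\in[\alpha-\varepsilon,\alpha+\varepsilon]$ (Lemma~\ref{lem:dense}), upgrading to the exact value via a separate monotone sandwiching argument (Lemma~\ref{lem:twosidesapproch}). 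Your attempt to hit $\alpha$ directly with $\varepsilon_k\downarrow0$ is what forces infinitely many pieces into $F$ and creates the accumulation problem; to repair it you must either specify a nested placement of the $G_k$ (and then redo the count carefully), or adopt the paper's two-stage scheme.

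For $\dim_L$ your outline is essentially the paper's proof: the uniform lower bound kills pipes everywhere, one keeps $M\approx\lambda^{-\alpha}$ packing balls at each step, the upper bound comes from $\dim_L\le\underline{\dim}_B$, and the lower bound from direct counting. That part is fine.
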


We claim that it is sufficient to prove Theorem \ref{Assouad}  for the case when $X\subset\R^d$ for some $d$ endowed with Euclidean metric. Let $(X, \rho^\varepsilon), ~0 <\varepsilon < 1$ be the snowflake metric space (see \cite[p.3]{Heinonen}). The Assouad embedding theorem (see \cite[Theorem 3.15]{Heinonen}) says that there is a bi-Lipschitz embedding $f$ which maps $(X, \rho^\varepsilon)$ into some Euclidean space $(\R^{d},|\cdot|)$. In fact the map $f$ is also an embedding from $(X, \rho)$ into $(\R^{d},|\cdot|)$. The claim follows by the following easy fact that 
\[
\dim_A (f(K),|\cdot|)=\frac{\dim_A (K,\rho)}{\varepsilon},~ K\subset X
\]
and this equality also holds for $\dim_A^{*}$ and $\dim_L$, here $\dim_A (K,\rho)$ denotes the Assouad dimension of $K$ with respect to the metric $\rho$.


The paper is organized as follows. In Section \ref{section Preliminary}, we recall some basic properties of Assouad and the lower dimensions; some equivalent definitions of these dimensions are given. The claims of Theorem \ref{Assouad} for Assouad dimension are proved in Section \ref{section, continuity property of Assouad dimension}. The lower dimension case is proved in Section  \ref{section, continuity property of lower dimension}. In Section \ref{section remarks-example}, we give some further remarks.

\section{Preliminary} \label{section Preliminary}

In this section, we give some equivalent definitions of Assouad and the lower dimensions which are more convenient to use for our later constructions.

We first introduce an equivalent definition of  Assouad  dimension, called the \emph{star dimension} and denoted by $\dim^{*}$, which was   introduced by Furstenberg in \cite{Furstenberg}. 

 Let $\mathcal{Q}_1$ be the collection of all cubes of $\R^{d}$ with side length $\leq 1$ and $\Q$ be the collection of all cubes of $\R^{d}$.
For any cube $Q\in \mathcal{Q}$, dividing $Q$ into $p^{d}$ $(2\leq p\in \N)$ equal sub-cubes, let $N_{p}(E,Q)$ be the number of those sub-cubes intersecting $E$. 
Defining 
\begin{equation}
H_{p}^{*}(E,\mathcal{Q}_1):=\max_{Q\in\mathcal{Q}_1}N_{p}(E,Q),  \label{e:hstar}
\end{equation} the local star dimension of $E$ is given by 
\begin{equation}
\dim^{*}E:=\lim_{p\rightarrow\infty}\frac{\log H_{p}^{*}(E,\mathcal{Q}_1)}{\log p}=\inf_{p>1} \frac{\log H_{p}^{*}(E,\mathcal{Q}_1)}{\log p} \label{e:sd}.
\end{equation}
Similarly, the (global) star dimension of $E$ is defined as 
\begin{equation}
\dim E:=\lim_{p\rightarrow\infty}\frac{\log H_{p}^{*}(E,\mathcal{Q})}{\log p} =\inf_{p>1} \frac{\log H_{p}^{*}(E,\mathcal{Q})}{\log p}
\end{equation}
where
\[
H_{p}^{*}(E,\mathcal{Q}):=\max_{Q\in\mathcal{Q}}N_{p}(E,Q). 
\]
The existence of the above limits is a simple consequence of the fact that the functions $p\mapsto H_{p}^*(E,Q)$ and $p\mapsto H_{p}^*(E,Q_1)$ are sub-multiplicative (see \cite[page 13]{Furstenberg}).

\begin{lemma}\label{lem:ballandcubes}
For any $E\subset \mathbb{R}^{d}$,  $\dim_A^* E= \dim^* E$ and $\dim_{A}E=\dim E$.
\end{lemma}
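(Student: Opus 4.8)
The proof reduces to the comparability of balls and cubes in $\R^{d}$, combined with one elementary counting fact: a ball of radius $\approx r$ is contained in a single ball (or cube) of comparable radius and meets at most $3^{d}$ cubes of side $r$ from a fixed grid, while conversely a cube of side $\ell/p$ has diameter $\sqrt{d}\,\ell/p$ and so is contained in a ball of radius $\sqrt{d}\,\ell/p$. I would prove the two inequalities $\dim_{A}^{*}E\le\dim^{*}E$ and $\dim^{*}E\le\dim_{A}^{*}E$ separately; the global equality $\dim_{A}E=\dim E$ follows by the same arguments after simply dropping the restriction to cubes of side $\le1$ and the scale cutoff $\rho$, which only makes matters easier.

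For $\dim_{A}^{*}E\le\dim^{*}E$: fix $t>\dim^{*}E$, so there is $C>0$ with $N_{p}(E,Q)\le Cp^{t}$ for every $Q\in\mathcal{Q}_1$ and every integer $p\ge2$. Put $\rho=1/2$ and take $x\in E$ and $0<r<R<\rho$. Then $B(x,R)$ lies in the cube $Q$ of side $2R\le1$ centred at $x$, so $Q\in\mathcal{Q}_1$. Choosing $p=\lceil 2\sqrt{d}\,R/r\rceil\ge2$, the subcubes in the $p^{d}$-subdivision of $Q$ have diameter $\sqrt{d}\cdot 2R/p\le r$, so each subcube that meets $E\cap B(x,R)$ is contained in a ball of radius $r$; since these subcubes cover $E\cap B(x,R)$ we get $N_{r}(E\cap B(x,R))\le N_{p}(E,Q)\le Cp^{t}\le C(3\sqrt{d})^{t}(R/r)^{t}$, using $R/r>1$. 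Hence $\dim_{A}^{*}E\le t$, and letting $t\downarrow\dim^{*}E$ gives the inequality.

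For $\dim^{*}E\le\dim_{A}^{*}E$: fix $s>\dim_{A}^{*}E$, so there are $C,\rho>0$ with $N_{r}(E\cap B(x,R))\le C(R/r)^{s}$ for all $x\in E$ and $0<r<R<\rho$. Fix an integer $k>\sqrt{d}/\rho$. Given any $Q\in\mathcal{Q}_1$, subdivide it into $k^{d}$ cubes $Q'$ of side $\ell'\le1/k$, so $\sqrt{d}\,\ell'<\rho$. For each $Q'$ with $E\cap Q'\ne\emptyset$ pick $x_{0}\in E\cap Q'$; then $Q'\subset B(x_{0},\sqrt{d}\,\ell')$, and for an integer $p\ge2$ cover $E\cap B(x_{0},\sqrt{d}\,\ell')$ by $N_{\ell'/p}(E\cap B(x_{0},\sqrt{d}\,\ell'))\le C(\sqrt{d}\,p)^{s}$ balls of radius $\ell'/p$. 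Every subcube of $Q'$ of side $\ell'/p$ meeting $E$ contains a point of $E\cap Q'$, hence is met by one of these balls, and a ball of radius $\ell'/p$ meets at most $3^{d}$ of the grid subcubes of side $\ell'/p$; therefore $N_{p}(E,Q')\le 3^{d}C d^{s/2}p^{s}$. By the sub-multiplicativity built into the definition, $N_{pk}(E,Q)=\sum_{Q'}N_{p}(E,Q')\le k^{d}3^{d}Cd^{s/2}p^{s}$, so $H_{pk}^{*}(E,\mathcal{Q}_1)\le k^{d}3^{d}Cd^{s/2}p^{s}$ for all $p\ge2$. Since $\dim^{*}E=\inf_{q>1}\frac{\log H_{q}^{*}(E,\mathcal{Q}_1)}{\log q}$, taking $q=pk$ and letting $p\to\infty$ yields $\dim^{*}E\le s$; letting $s\downarrow\dim_{A}^{*}E$ completes the proof of $\dim_{A}^{*}E=\dim^{*}E$, and the global identity is obtained verbatim without the thresholds.

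The geometric content — the two-sided comparison between ball-covers and grid-cube-covers — is entirely routine; the only place demanding care is matching the two localization conventions, namely the side-length bound $\le1$ in the definition of $\dim^{*}$ against the cutoff $\rho$ in the definition of $\dim_{A}^{*}$, which is precisely what the extra $k$-fold subdivision and sub-multiplicativity step above are for. I expect this bookkeeping to be the main (mild) obstacle.
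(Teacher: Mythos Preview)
Your proof is correct and follows essentially the same route as the paper's: both directions rest on the routine comparability of ball-covers and grid-cube-covers in $\R^{d}$, converting between $N_{r}(E\cap B(x,R))$ and $N_{p}(E,Q)$ via the obvious inclusions. The only noteworthy difference is organisational: the paper runs a single $\varepsilon$-argument using the two-sided bound $p^{s-\varepsilon}\le H_{p}^{*}\le p^{s+\varepsilon}$, while you argue each inequality separately via $t>\dim^{*}E$ and $s>\dim_{A}^{*}E$; in particular your $k$-fold subdivision to reconcile the scale cutoff $\rho$ with the side-length bound $\le1$ is a point the paper passes over without comment.
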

\begin{proof}
We only prove the first equality; the second one holds in a similar way. Assume $\dim^{*} E=s$. For any $\varepsilon>0$, there exists $M\in \N$ such that for any $p \geq M$,
\begin{equation}\label{eq:maximalintersection}
p^{s-\varepsilon}\leq H_p^{*}(E, \Q_1) \leq p^{s+\varepsilon}.
\end{equation}
Thus for any $n\in \N$, there is a cube $Q_n\in \Q_1$ with side length $\ell_n$ such that
\begin{equation}\label{eq:v}
N_{M^n}(E,Q_n)\geq M^{n(s-\varepsilon)}.
\end{equation}
Let $x_n \in Q_n\cap E, R_n = \sqrt{d}\ell_n$ and $r_n =M^{-n}\ell_n$. Then by an elementary geometric argument we obtain that 
\begin{equation}\label{a1}
N_{r_n}\left(E\cap B(x_n, R_n)\right)\geq C_1 N_{M^{n}}(E, Q_n)
\end{equation}
for some constant $C_1$ depending only on $d$.
Thus, by \eqref{eq:v} and \eqref{a1} we have
\[
N_{r_n}\left(E\cap B(x_n, R_n)\right) \geq C_{1} M^{n(s-\varepsilon)} \geq C_{2} (R_n/r_n)^{s-\varepsilon},
\]
where $C_{2}$ is a constant depending only on $d$.
Hence $\dim^{*}_A E \geq s-\varepsilon$. Since we can choose $\varepsilon$ arbitrarily small, $\dim^{*}_A E \geq s$.

We show the opposite direction in the following. For any ball $B(x,R)$ with $x\in E$ and $0<r<R\leq 1.$ There exists a unique cube $Q$, which contains $B(x,R)$, of side length $2R$. Let $p= \lfloor \frac{R}{r}\rfloor+1$.
First we assume that $p\geq M$. In this case, applying the estimate \eqref{eq:maximalintersection}, $N_p(E, Q)\leq p^{s+\varepsilon}$, and by a volume argument we obtain that  
\[
N_{2r\sqrt{d}}(E\cap B(x,R)) \leq p^{s+\varepsilon}\leq  C_{3}\left(\frac{R}{r}\right)^{s+\varepsilon}.
\]
Hence 
\[
N_{r}(E\cap B(x,R)) \leq  C_{4}\left(\frac{R}{r}\right)^{s+\varepsilon}.
\] 

For the case $p<M$, there exits a $C_{5}$ only depending on $d$ such that
\[
N_{r}(E\cap B(x,R))\leq N_{r}(B(x,R))\leq C_{5} \leq C_{5}\left(\frac{R}{r}\right)^{s+\varepsilon}.
\]
By the arbitrary choice of $\varepsilon$, we conclude that $\dim^{*}_A E\leq s$. Thus we complete the proof.
\end{proof}

In the definition of star dimension, it is convenient to consider $b$-adic cubes where $b\geq 2$ is an integer. 
\begin{lemma}
Let $\mathcal{D}_{b}$ be the set of all $b$-adic  cubes in $[0,1]^{d}$. Then
\[\dim^{*}E=\lim_{n\rightarrow\infty}\frac{\log H^{*}_{b^{n}}(E,\mathcal{D}_{b})}{n\log b}.\]
\end{lemma}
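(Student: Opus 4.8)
The plan is to show the two quantities $\dim^* E$ (defined via the full family $\mathcal{Q}_1$ of cubes of side length $\le 1$ and the division parameter $p$) and $\lim_{n\to\infty} \frac{\log H^*_{b^n}(E,\mathcal{D}_b)}{n\log b}$ (defined via the $b$-adic grid) are equal, by a two-sided comparison of $H^*_{b^n}(E,\mathcal{D}_b)$ with $H^*_{b^n}(E,\mathcal{Q}_1)$ up to a bounded multiplicative constant depending only on $d$. Since $\mathcal{D}_b\subset\mathcal{Q}_1$, one inequality, namely $H^*_{b^n}(E,\mathcal{D}_b)\le H^*_{b^n}(E,\mathcal{Q}_1)$, is immediate, giving $\lim_n \frac{\log H^*_{b^n}(E,\mathcal{D}_b)}{n\log b}\le \dim^*E$ directly from \eqref{e:sd} (taking the subsequence $p=b^n$).

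For the reverse inequality, first I would fix an arbitrary cube $Q\in\mathcal{Q}_1$ with side length $\ell\le 1$ and bound $N_{b^n}(E,Q)$ — the number of the $b^{nd}$ equal subcubes of $Q$ meeting $E$ — in terms of the $b$-adic subcubes of an enclosing $b$-adic cube. Concretely, choose $m\in\N$ with $b^{-m}\le \ell < b^{-(m+1)}$ wait, rather $b^{-(m+1)} < \ell \le b^{-m}$ wait let me just say: choose $m$ so that $b^{-m-1}<\ell\le b^{-m}$; then $Q$ is contained in a union of at most $2^d$ (in fact a bounded number depending only on $d$) $b$-adic cubes of $\mathcal{D}_b$ of side length $b^{-m}$. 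Each of the $b^{-n}\ell$-sized subcubes of $Q$ is contained in one of the $b^{-n-m}$-sized $b$-adic subcubes of those enclosing $b$-adic cubes, and conversely each such $b$-adic subcube meets a bounded number of the subdivision cubes of $Q$. This yields $N_{b^n}(E,Q)\le C_d\, H^*_{b^{n+m}}(E,\mathcal{D}_b)$... but $m$ depends on $Q$, which is harmless because it only affects things through $b^{-m}$, yet there is a subtlety: as $\ell\to 0$, $m\to\infty$, so the exponent $n+m$ is unbounded over all $Q$. I would handle this by noting that for the $\limsup$ it suffices to work with cubes $Q$ of side length exactly $1$ after a rescaling argument — rescaling a cube of side length $b^{-m}$ to a unit cube does not change $N_{b^n}(E,Q)$, and a unit cube is covered by a bounded number of unit $b$-adic cubes — so effectively $H^*_{b^n}(E,\mathcal{Q}_1)\le C_d\, H^*_{b^n}(E,\mathcal{D}_b)$ with $C_d$ independent of $n$.

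Taking logarithms, dividing by $n\log b$, and letting $n\to\infty$ kills the constant $C_d$ and gives $\dim^*E=\lim_n \frac{\log H^*_{b^n}(E,\mathcal{D}_b)}{n\log b}$; here I also use that the limit over the subsequence $p=b^n$ in \eqref{e:sd} equals the full limit since the limit in \eqref{e:sd} exists (by submultiplicativity). The main obstacle is precisely the uniformity issue: the naive enclosure of an arbitrary $Q\in\mathcal{Q}_1$ by $b$-adic cubes introduces a shift of resolution level that depends on $Q$, and one must argue (via scale invariance of $N_p(E,\cdot)$ under dilation of cubes, or by reducing to side length $1$) that this does not contaminate the asymptotic exponent. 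Everything else is the standard "a ball/cube at scale $r$ meets boundedly many grid cells at a comparable scale, and vice versa" counting, with all constants depending only on $d$ and $b$.
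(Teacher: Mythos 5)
Your argument is correct and essentially identical to the paper's: one direction follows from $\mathcal{D}_b\subset\mathcal{Q}_1$, the other from covering an arbitrary cube of side $\ell$ by at most $2^d$ $b$-adic cubes of the comparable generation $b^{-m-1}\le\ell<b^{-m}$ with a bounded-multiplicity count, and the constants are killed in the limit along the subsequence $p=b^n$ (which suffices because the limit in the definition of $\dim^*$ exists). The ``uniformity issue'' you worry about is not actually present: since $H^*_{b^n}(E,\mathcal{D}_b)=\max_{D\in\mathcal{D}_b}N_{b^n}(E,D)$ already takes the maximum over $b$-adic cubes of \emph{every} generation, the estimate reads $N_{b^n}(E,Q)\le C\max_i N_{b^n}(E,D_i)\le C\,H^*_{b^n}(E,\mathcal{D}_b)$ with the same subdivision index $n$ on both sides, so no shift to $b^{n+m}$ occurs and the rescaling step is unnecessary.
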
 
\begin{proof}
Since the limit in (\ref{e:sd}) exists, the limit can be reached using the subsequence $p=b^{n}$, i.e., \[\dim^{*}E=\lim_{n\rightarrow\infty}\frac{\log H_{b^{n}}^{*}(E,\mathcal{Q})}{\log b^{n}}.\]  
Note that $\mathcal{D}_{b}\subset  \mathcal{Q}$. By (\ref{e:hstar}), we have $H_{b^{n}}^{*}(E,\mathcal{D}_{b})\leq H_{b^{n}}^{*}(E,\mathcal{Q})$ for any $n\geq 1$. On the other hand, for any $Q\in \mathcal{Q}$ of side length $\ell$, it can be covered by at most $2^{d}$ cubes of side length $b^{m}$ in $\mathcal{D}_{b}$  where $b^{-m-1}\leq \ell <b^{-m}$. Suppose $Q\subset \bigcup_{i=1}^{k}D_{i}$, where $D_{i}\in\mathcal{D}_{b}$ and $k\leq 2^{d}$. Thus \[N_{b^{n}}(E,Q)\leq \sum_{i=1}^{k}N_{b^{n}}(E,D_{i})\leq 2^{d}\max_{D\in \mathcal{D}_{b}}N_{b^{n}}(E,D).\]
Since $Q$ can be chosen arbitrarily, we have $H_{b^{n}}^{*}(E,\mathcal{D}_{b})\geq 2^{-d}H_{b^{n}}^{*}(E,\mathcal{Q})$. Therefore
\[\dim^{*}E=\lim_{n\rightarrow\infty}\frac{\log H_{b^{n}}^{*}(E,\mathcal{Q})}{\log b^{n}}=\lim_{n\rightarrow\infty}\frac{\log H_{b^{n}}^{*}(E,\mathcal{D}_{b})}{\log b^{n}}.\]
Thus we complete the proof.
\end{proof}

Now we give an equivalent definition of the lower dimension. For any $A\subset \mathbb{R}^{d}$ and $ a\in A$, let $N^{*}_{r}(E\cap B(a,R))$ be the largest number of disjoint balls of radius  $r$ and centres in $E$ contained in $B(a,R)$. Such balls are called \emph{packing balls}.
\begin{lemma}\label{lem:lowereq}
For any $E\subset \mathbb{R}^{d}$, 
\begin{equation}
\begin{aligned}
\dim_{L}E  = \sup\Big\{ s:  \exists\ C,\rho>0 \textrm{ s.t. } & \forall ~0<r<R<\rho,\\ 
& \inf_{x\in E}N^{*}_{r}(E\cap B(x,R))\geq C\left(R/r\right)^{s}
\Big\}.
\end{aligned}
\end{equation}
\end{lemma}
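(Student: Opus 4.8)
The plan is to exploit the classical fact that covering numbers and packing (separated-set) numbers differ only by a bounded rescaling of the radius, together with the observation that such a rescaling leaves the exponent in the definition of $\dim_L$ unchanged. For $A\subset\R^d$ and $\lambda>0$ call a subset of $A$ \emph{$\lambda$-separated} if any two of its points lie at distance $>\lambda$, and let $M_\lambda(A)$ be the largest cardinality of a $\lambda$-separated subset of $A$; this is finite since every $E\cap B(x,R)$ is bounded in $\R^d$. Maximal $\lambda$-separated sets form the usual bridge between the two kinds of counting: if $\{y_i\}$ is a maximal $\lambda$-separated subset of $A$, then the balls $B(y_i,\lambda)$ cover $A$, while the balls $B(y_i,\lambda/2)$ are pairwise disjoint.

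The key step is the pair of elementary inequalities, valid for all $x\in E$ and $0<r<R$ (and needing nothing beyond the metric, in particular no doubling):
\begin{equation}\label{eq:proposalcompare}
N^{*}_{r}(E\cap B(x,R))\le N_{r/2}(E\cap B(x,R)),\qquad N_{r}(E\cap B(x,R))\le N^{*}_{r/4}(E\cap B(x,2R)).
\end{equation}
For the left inequality, the centres of a disjoint family of radius-$r$ balls inside $B(x,R)$ form an $r$-separated subset of $E\cap B(x,R)$; since a ball of radius $\le r/2$ has diameter $\le r$, it meets such a set in at most one point, so any cover of $E\cap B(x,R)$ by radius-$\le r/2$ balls uses at least as many balls as there were centres, whence $N^{*}_{r}(E\cap B(x,R))\le M_r(E\cap B(x,R))\le N_{r/2}(E\cap B(x,R))$. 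For the right inequality, choose $\{y_i\}$ a \emph{maximal} $(r/2)$-separated subset of $E\cap B(x,R)$; maximality gives $E\cap B(x,R)\subset\bigcup_i B(y_i,r)$, so $N_r(E\cap B(x,R))\le\#\{y_i\}$, while the balls $B(y_i,r/4)$ are pairwise disjoint, centred in $E$, and (using $r<R$) contained in $B(x,R+r/4)\subset B(x,2R)$, so $\#\{y_i\}\le N^{*}_{r/4}(E\cap B(x,2R))$.

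Finally I would feed \eqref{eq:proposalcompare} into the two definitions. Call $s$ \emph{admissible} for a given version if the corresponding inequality holds with some constant $C>0$ for all $0<r<R<\rho$ and some $\rho>0$. If $s$ is admissible for the packing version of Lemma \ref{lem:lowereq}, then for $0<r<R/2$ the left inequality of \eqref{eq:proposalcompare} gives $N_r(E\cap B(x,R))\ge N^{*}_{2r}(E\cap B(x,R))\ge C2^{-s}(R/r)^{s}$, while for $R/2\le r<R$ we have $R/r\le2$ and $N_r(E\cap B(x,R))\ge1\ge2^{-s}(R/r)^{s}$; hence $s$ is admissible for the original definition of $\dim_L$ with the same $\rho$. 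Conversely, if $s$ is admissible for the original definition, the right inequality of \eqref{eq:proposalcompare} followed by the change of variables $\tilde r=r/4$, $\tilde R=2R$ gives $N^{*}_{\tilde r}(E\cap B(x,\tilde R))\ge C8^{-s}(\tilde R/\tilde r)^{s}$ for $0<\tilde r<\tilde R/8$, $\tilde R<2\rho$, while for $\tilde R/8\le\tilde r<\tilde R$ the single ball $B(x,\tilde r)\subset B(x,\tilde R)$ (centre $x\in E$) shows $N^{*}_{\tilde r}(E\cap B(x,\tilde R))\ge1\ge8^{-s}(\tilde R/\tilde r)^{s}$; so $s$ is admissible for the packing version with $\rho$ replaced by $2\rho$. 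Taking suprema over admissible $s$ in both directions yields the claimed equality.

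I expect the only delicate part to be the bookkeeping in the last step: one must check that the factors $2,\tfrac12,4,8$ merely turn the target $C(R/r)^{s}$ into a harmless constant multiple of itself, and that the ``diagonal'' regime where $r$ is comparable to $R$ is absorbed by the trivial bounds $N_r\ge1$ and $N^{*}_r\ge1$. All genuinely geometric content sits in the two inequalities of \eqref{eq:proposalcompare}, which are standard, so no serious obstacle should arise.
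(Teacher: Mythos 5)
Your proof is correct and follows essentially the same route as the paper: both rest on a two-sided comparison between covering numbers and packing numbers at comparable scales (your display is the paper's key inequality $N_{2r}(A\cap B(a,R/2))\leq N_{r}^{*}(A\cap B(a,R))\leq N_{r/3}(A\cap B(a,R))$ up to relabelling of radii and of the ambient ball), after which the bounded rescaling is absorbed into the constant. You merely carry out explicitly the final bookkeeping that the paper leaves to the reader.
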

\begin{proof}
The result follows directly from the fact that for any $A\subset \mathbb{R}^{d}, a\in A$ and $0<r<R$, we have 
\[
N_{2r}(A\cap B(a, R/2))\leq N_{r}^{*}(A \cap B(a,R))\leq N_{r/3}(A\cap B(a,R)).
\] 
The first inequality holds because if we enlarge the radius of $N_{r}^{*}(A\cap B(a,R))$ packing balls to $2r$, then the enlarged $N_{r}^{*}(A\cap B(a,R))$ balls must cover $A\cap B(a, R/2)$. Otherwise there exists a ball of radius $r$ and centre in $A\cap B(a, R/2)$ which is disjoint with the previous $N_{r}^{*}(A\cap B(a,R))$ packing balls. This contradicts the definition of $N_{r}^{*}(A\cap B(a,R))$. For the second inequality, since the centres of $N_{r}^{*}(A\cap B(a,R))$ packing balls are inside $B(a,R)$, each $\frac{r}{3}$-cover of $A\cap B(a,R)$ must cover such centres. Hence each packing ball contains at least one $\frac{r}{3}$-covering ball. 
\end{proof}

\section{ Proof of Theorem \ref{Assouad} for Assouad dimension}
\label{section, continuity property of Assouad dimension}

Recall that $\mathcal{D}_{b}$ is the set of all $b$-adic  cubes in $[0,1]^{d}$.
For $n\in \N$, let $\mathcal{D}_{b}(n)$ be the collection of all $b$-adic cubes of side length $b^{-n}$. For any $K\subset \mathcal{D}_{b}(n)$, $\widetilde{K}$ will stand for the union of cubes $\bigcup_{D\in K}D$, which is a subset of $\mathbb{R}^{d}$. 
\begin{lemma} \label{lem:rc}
Let $\varepsilon>0$,  $M,n\in \N$ and $K \subset \D_{M}(n)$ such that  ${\rm Card} (K) \geq M^{ns}$ and $N_M(\widetilde{K},Q) \leq M^{s+\varepsilon}$ for any $Q \in \mathcal{D}_{M}(i), 1\leq i \leq n-1$. Let $N\in \N$ so that $N \leq M^{s+\varepsilon}$. Then there exists $F \subset K$ such that 
\begin{equation}
N_M(\widetilde{F},Q) \leq N \label{e:l:1}
\end{equation}
for any $Q \in \D_{M}(i-1)$ and $1\leq i\leq n$,
and furthermore we have
\begin{equation}
{\rm Card} (F) \geq N^nM^{-n\varepsilon}.  \label{e:l:2}
\end{equation}
\end{lemma}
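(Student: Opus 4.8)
The plan is to build $F$ greedily, pruning the tree of cubes $K$ level by level so that every cube of $\mathcal{D}_M(i-1)$ retains at most $N$ of its $M$-adic children, while losing only a controlled fraction of the total count at each step. Think of $K$ as the leaves of a tree $T$ of depth $n$: the root is $[0,1]^d$ (or the ambient cube containing everything), and the vertices at level $i$ are those $Q\in\mathcal{D}_M(i)$ with $\widetilde{K}\cap Q\neq\emptyset$; by hypothesis every internal vertex has at most $M^{s+\varepsilon}$ children, and the number of leaves is ${\rm Card}(K)\geq M^{ns}$. I want a subtree $T'$ in which every internal vertex has at most $N$ children and the number of leaves is at least $N^n M^{-n\varepsilon}$; then $F$ is the set of leaves of $T'$.

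First I would set up the level-by-level pruning. Suppose inductively that after pruning levels $1,\dots,i-1$ we have a subtree whose leaves at level $i$ (before the cut) are a collection $K_i\subset\mathcal{D}_M(i)$. For each $Q\in\mathcal{D}_M(i-1)$ surviving so far, it has at most $M^{s+\varepsilon}\le M^{s+\varepsilon}$ children in $K_i$; I keep the $N$ children of $Q$ that carry the most descendant leaves (ties broken arbitrarily), discarding the rest. The key counting estimate is that the retained children account for at least a fraction $N/M^{s+\varepsilon}$ of the leaves descending from $Q$: indeed, if $Q$ has $k\le M^{s+\varepsilon}$ children with leaf-counts $a_1\ge a_2\ge\cdots\ge a_k$, then $a_1+\cdots+a_N\ge \frac{N}{k}(a_1+\cdots+a_k)\ge \frac{N}{M^{s+\varepsilon}}(a_1+\cdots+a_k)$. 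Summing this bound over all surviving $Q\in\mathcal{D}_M(i-1)$ shows the total leaf count is multiplied by at least $N M^{-(s+\varepsilon)}$ at each of the $n$ steps. Hence after all $n$ steps the number of leaves is at least
\[
{\rm Card}(K)\cdot\left(N M^{-(s+\varepsilon)}\right)^{n}\ \ge\ M^{ns}\cdot N^n M^{-n(s+\varepsilon)}\ =\ N^n M^{-n\varepsilon},
\]
which is \eqref{e:l:2}. Taking $F\subset K$ to be this final leaf set, property \eqref{e:l:1} is immediate for $Q\in\mathcal{D}_M(i-1)$ with $1\le i\le n$, since every such $Q$ was cut down to at most $N$ children, and for the larger cubes $Q\in\mathcal{D}_M(i)$ with $i\le n-2$ one notes $N_M(\widetilde F,Q)\le N_M(\widetilde K,Q)\le M^{s+\varepsilon}$ — but in fact the lemma only asks for \eqref{e:l:1} at the scales corresponding to children of $\mathcal{D}_M(i-1)$, so the pruning bound is exactly what is needed.

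The only genuinely delicate point is bookkeeping at the top of the tree: the hypothesis controls $N_M(\widetilde K,Q)$ only for $Q\in\mathcal{D}_M(i)$ with $1\le i\le n-1$, so when I prune the children of a level-$0$ cube I should make sure that statement \eqref{e:l:1} with $i=1$ is what I actually want (children of $\mathcal{D}_M(0)$, i.e. sub-cubes of side $M^{-1}$ inside a unit cube), and that $K$ may a priori spread over several cubes of $\mathcal{D}_M(0)$; one handles this either by noting $K\subset\mathcal{D}_M(n)\subset\mathcal{D}_M$ restricted to $[0,1]^d$ so there is a single root, or by treating each top-level cube separately and observing the bound \eqref{e:l:2} is additive. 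I expect the main obstacle to be purely notational — keeping the indexing of "level $i$'', "children'', and the side lengths $M^{-i}$ consistent with the statement's convention that \eqref{e:l:1} is indexed by $Q\in\mathcal{D}_M(i-1)$ for $1\le i\le n$ — rather than anything mathematically substantive; the averaging inequality above is the whole engine of the proof.
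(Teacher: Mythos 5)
Your construction is correct and, at the level of the tree structure, identical to the paper's: both prune $K$ level by level so that each surviving cube keeps at most $N$ of its $M$-adic children, and both get \eqref{e:l:2} from the fact that each pruning step loses at most a factor $N M^{-(s+\varepsilon)}$ of the leaf count. The difference is in how the surviving leaf count is bounded below. The paper chooses the $\min\{N,{\rm Card}(\mathcal{A}(Q))\}$ retained children uniformly at random, observes that each leaf of $K$ survives with probability at least $(N/M^{s+\varepsilon})^n$, and invokes linearity of expectation to conclude that some realization has at least $N^nM^{-n\varepsilon}$ leaves. You instead keep, deterministically, the $N$ children carrying the most descendant leaves and use the averaging inequality $a_1+\cdots+a_N\geq \frac{N}{k}(a_1+\cdots+a_k)$ for $a_1\geq\cdots\geq a_k$, $k\leq M^{s+\varepsilon}$. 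Your greedy argument is exactly the derandomization of the paper's expectation computation (the expected retained fraction equals $N/k$, and the heaviest $N$ children always achieve at least the average), so it yields the same bound with slightly less machinery; the probabilistic phrasing buys nothing extra here. Your closing remark about the root is also apt: the hypothesis only controls $N_M(\widetilde K,Q)$ for $Q\in\mathcal{D}_M(i)$ with $1\leq i\leq n-1$, so the first pruning step (children of the level-$0$ cube) is not literally covered; the paper's proof silently makes the same assumption at level $0$, so this is a defect of the lemma's statement rather than of either argument, and it is repaired by reading the hypothesis as holding for $0\leq i\leq n-1$.
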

\begin{proof}
We will construct a sequence of sets $F_i \ (i=0,1,\cdots, n)$ which are composed of cubes in $\mathcal{D}_{M}(i)$, i.e., $F_{i}\subset \mathcal{D}_{M}(i)$. Moreover, $\widetilde{F}_{i+1}\subset \widetilde{F}_{i}$ for $i=0,1,\cdots,n-1$ and the set $F_n$ satisfies \eqref{e:l:1} and \eqref{e:l:2}.

Let $F_0=\{[0,1]^d\}$. Suppose that $F_{i}$ has been  constructed. To construct $F_{i+1}$, for each $Q\in F_{i}$, we select some sub-cubes $\mathcal{B}(Q)\subset \mathcal{A}(Q)$ with cardinality $\min\{N,{\rm Card}(\mathcal{A}(Q))\}$  where 
$$\mathcal{A}(Q):=\{C\in \D_M(i+1): C\subset Q \text{ and } C\cap\widetilde{K}\neq \emptyset \}.$$
Let $F_{i+1}=\bigcup_{Q\in F_i}\mathcal{B}(Q).$ We claim that there is one such $F_n$ which satisfies \eqref{e:l:1} and \eqref{e:l:2}. Note that \eqref{e:l:1} is always true by construction. To that end, we use a probabilistic method and assume that each $\mathcal{B}(Q)\subset \mathcal{A}(Q)$ was chosen uniformly at random (so that each $\min\{N,{\rm Card}(\mathcal{A}(Q))\}$ element subset of $\mathcal{A}(Q)$ has the same probability of being chosen).  Then for each $Q\in K$ there is probability at least $(N/M^{s+\varepsilon})^{n}$ for $Q\in F_n$.
By the linearity of expectation,  
\begin{equation}
\mathbb{E}( {\rm Card}  (F_n))= \sum_{Q \in K} \mathbb{P}(Q \in F_n) \geq N^nM^{-n\varepsilon}.
\end{equation}
So there exists an $F_n$ with ${\rm Card}  (F_n) \geq N^nM^{-n\varepsilon}$. 
\end{proof}


\begin{lemma}\label{lem:dense}
Let $E\subset [0,1]^{d}$ with $\dim^{*}E=s\in [0,d]$. For any $\alpha\in (0,s)$ and any $\varepsilon>0$ there exists an $F\subset E$ such that $\dim^{*}F\in [\alpha-\varepsilon, \alpha+\varepsilon]$.
\end{lemma}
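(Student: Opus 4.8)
The plan is to build $F$ as a nested intersection of unions of $M$-adic cubes, using Lemma~\ref{lem:rc} to control the combinatorial growth along a carefully chosen sequence of scales. Fix $M$ large enough (depending on $\varepsilon$) so that, by the definition of the star dimension via $M$-adic cubes, for all large $n$ there is a cube $Q_n\in\mathcal{D}_M$ on which $E$ meets at least $M^{n(s-\varepsilon)}$ of the $M^n$ sub-cubes of generation $n$ inside $Q_n$, while simultaneously $N_M(\widetilde K,Q)\le M^{s+\varepsilon}$ for every $M$-adic cube $Q$ (this upper bound is uniform by the choice of $M$ and $s=\dim^* E$). After rescaling $Q_n$ to $[0,1]^d$, this furnishes the hypotheses of Lemma~\ref{lem:rc} with the exponent $s$ replaced by $s-\varepsilon$ (for the lower bound on $\mathrm{Card}(K)$) and $s+\varepsilon$ (for the per-scale cap).

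Next I would choose the target branching parameter. To force $\dim^* F$ to land near $\alpha$, pick an integer $N$ with $N\approx M^{\alpha}$, i.e. $\lfloor M^\alpha\rfloor \le N \le M^{s+\varepsilon}$ (possible once $\alpha<s$ and $M$ is large), and apply Lemma~\ref{lem:rc} to each rescaled block $Q_n$ to extract $F^{(n)}\subset K_n$ with $N_M(\widetilde{F^{(n)}},Q)\le N$ at every scale and $\mathrm{Card}(F^{(n)})\ge N^n M^{-n\varepsilon}$. The per-scale cap $N$ immediately gives $H^*_{M^k}(\widetilde{F^{(n)}},\mathcal{D}_M)\le N^k$ for $k\le n$, hence an upper bound $\dim^* \widetilde{F^{(n)}}\le \log N/\log M \le \alpha$; the cardinality lower bound $\mathrm{Card}(F^{(n)})\ge N^n M^{-n\varepsilon}$ together with the fact that these cubes sit inside a single generation-$n$ cube yields a lower bound of roughly $\log N/\log M - \varepsilon \ge \alpha - 2\varepsilon$ for the ``coarse'' dimension of $\widetilde{F^{(n)}}$ seen at scales between $M^{-n}$ and $1$. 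The point of having a whole sequence $n\to\infty$ is that a single $\widetilde{F^{(n)}}$ is only a finite union of cubes (dimension $d$ at small scales); I would assemble $F$ as a disjoint union $F=\bigcup_n (E\cap \widetilde{F^{(n)}}\cap Q_n)$ of scaled copies placed far apart, or more cleanly fix one limiting Cantor-type construction where at the $j$-th stage one refines by a further $n_j$ generations with $N$-branching, so that the only relevant scales are the $M$-adic ones and the counting function is squeezed between $N^k M^{-k\varepsilon}$ and $N^k$ at scale $M^{-k}$. Either way one reads off $\dim^* F \in [\alpha-2\varepsilon,\alpha+\varepsilon]$ (and $\ge \alpha-\varepsilon$ after enlarging $M$); since $\varepsilon$ was arbitrary this proves the lemma, and replacing $\varepsilon$ by $\varepsilon/2$ gives the stated interval.

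The main obstacle I anticipate is the bookkeeping that makes the two-sided estimate hold \emph{simultaneously at all scales}, not just along the special scales $M^{-n}$. The upper bound is robust — the cap $N$ in Lemma~\ref{lem:rc}\eqref{e:l:1} is enforced at every $M$-adic scale by construction, and passing between $M$-adic and dyadic or arbitrary scales only costs the dimension-dependent constants already handled in Lemma~\ref{lem:ballandcubes}. The delicate point is the lower bound: $\mathrm{Card}(F^{(n)})\ge N^n M^{-n\varepsilon}$ controls the total count at the finest scale $M^{-n}$, but to conclude $\dim^* F \ge \alpha-\varepsilon$ I need that the refinement looks ``uniformly spread'' — i.e. that inside a typical intermediate cube the descendants of $F$ still number about $N$ per scale from that cube down. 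This is exactly why the probabilistic selection in Lemma~\ref{lem:rc} chooses $\min\{N,\mathrm{Card}(\mathcal A(Q))\}$ children \emph{for every} surviving $Q$ rather than pruning globally; I would exploit that to argue that, conditioning on a cube $Q\in F_i$ surviving, the sub-tree below it again satisfies a lower-bound of the same exponential type, so that the coarse counting at every pair of scales $M^{-j}<M^{-k}$ is at least $c\,N^{k-j}M^{-(k-j)\varepsilon}\ge c (R/r)^{\alpha-\varepsilon}$. Getting that conditional lower bound to be genuinely uniform over cubes and scales — rather than merely in expectation — is the technical heart of the argument, and is where I expect the construction (and the choice to iterate a fixed block rather than glue disparate $\widetilde{F^{(n)}}$) to be forced.
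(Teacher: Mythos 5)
Your outline follows the paper's argument closely: the same key Lemma~\ref{lem:rc} with branching parameter $N=\lfloor M^{\alpha}\rfloor$, the same way of locating blocks on which $E$ meets at least $M^{n(s-\varepsilon)}$ sub-cubes while obeying the uniform per-scale cap $M^{s+\varepsilon}$, and the same recognition that a single finite block does not suffice and infinitely many must be assembled. Two points, though. First, the difficulty you single out as ``the technical heart'' --- making the lower bound uniform over all cubes and all pairs of scales, beyond what the probabilistic selection gives in expectation --- is not actually needed. Because the limit defining $\dim^{*}$ exists (submultiplicativity of $p\mapsto H^{*}_{p}$), to prove $\dim^{*}F\geq\alpha-\varepsilon$ it suffices to exhibit, along a single sequence of scales $p_k=M^{n_k}\to\infty$, \emph{one} cube $Q$ with $N_{p_k}(F,Q)\geq p_k^{\alpha-\varepsilon}$; the total cardinality bound ${\rm Card}(F^{(n)})\geq N^{n}M^{-n\varepsilon}$ applied to the block cube itself is exactly such a witness. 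No conditional or per-cube lower bound is required, so the issue you expected to force the construction simply dissolves.

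Second, the step you leave open --- how to glue the blocks --- is where your two alternatives diverge, and the paper's resolution is a third option you do not quite state: choose the witness cubes \emph{nested}, $Q_{n_1}\supset Q_{n_2}\supset\cdots$ with $Q_{n_{k+1}}$ at depth $n_{k+1}=n_k+i_{n_k}$ (possible because $\dim^{*}$ is finitely stable, so some child of $Q_{n_k}$ always retains full star dimension), and place the $k$-th finite piece inside $Q_{n_k}$ at resolution $M^{-n_{k+1}}$. Then for a test cube at level $\ell\in[n_k,n_{k+1})$ the entire tail $\bigcup_{j>k}E_{n_j}$ sits inside the single cube $Q_{n_{k+1}}$ and contributes $O(1)$ to the count, the active piece contributes at most $N$ by the cap from Lemma~\ref{lem:rc}, and the coarser pieces are already resolved to single points at that scale. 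Your ``placed far apart'' variant is risky inside $[0,1]^{d}$ for the local star dimension (a cube near an accumulation point of the block locations picks up one sub-cube from each of unboundedly many pieces, spoiling the upper bound), and your ``iterate one fixed block'' variant would require exactly the per-cube uniformity that Lemma~\ref{lem:rc} does not supply. With the nested assembly, your plan becomes the paper's proof.
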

\begin{proof}
Since $\dim^{*}E=s$, there exists an $M_0 \in \N$, such that for any $M \geq M_0$, 
\begin{equation}\label{eq:lem:dense:1}
H_M^*(E, \D_M) \leq M^{s+\varepsilon/2}.
\end{equation}
Let $N := \lfloor M^\alpha \rfloor$ be the integer part of $M^\alpha$. In the following we suppose $M$ is  large enough such that 
$$N\ge M^{\alpha-\varepsilon/2}.$$ 
By the definition of $\dim_A E$, we have $H_p^*(E, \mathcal{Q}_1) \geq p^{s}$ holds for any $p\geq 2$.

Let $Q_{0}=[0,1]^d$. By finite stability of Assouad dimension, there exists a cube $Q_{1} \in \mathcal{D}_{M}(1)$ such that $\dim^{*}(Q_{1}\cap E)=s$. Similarly we obtain $Q_2 \in \D_{M}(2),~ Q_2 \subset Q_{1}$ with $\dim^{*}(Q_{2}\cap E)=s$. In the end we obtain a sequence  $(Q_{n})_{n\geq 1}\subset \mathcal{D}_{M}$ such that $Q_{n+1}\in \mathcal{D}_{M}(n+1),~ Q_{n+1} \subset Q_{n}$ and $\dim_{A}(Q_{n}\cap E)=s$ for all $n\geq 1$. 

For any $n$, since $\dim^{*}(E\cap Q_{n})=s$, there exist  $i_{n}\geq n$ and $I_n \in \mathcal{D}(i_{n}),~ I_n \subset Q_n$ such that
\begin{equation}
M^{ns}\leq N_{M^{n}}(E, I_{n})\leq M^{n(s+\varepsilon/2)}. \label{e:thm1:1}
\end{equation}
Let 
\[
K_{n}=\left\{Q\in \mathcal{D}_M(n+i_{n}) ~|~ Q\cap E\cap I_n\neq \emptyset \right\}.
\]
By \eqref{eq:lem:dense:1} and (\ref{e:thm1:1}), $K_{n}$ satisfies the condition of Lemma  \ref{lem:rc}. Applying Lemma \ref{lem:rc} to $K_{n}$, we obtain an $F_n\subset K_{n}$ satisfying (\ref{e:l:1}) and (\ref{e:l:2}). From each cube $Q$ in $F_{n}$, we arbitrarily choose one point in $Q\cap E$  and denote the union of such points by $E_{n}$. We also conclude that $E_{n}$ satisfies
\begin{equation}
H^{*}_{M^{n}}(E_{n},\mathcal{D})\geq N^{n}M^{-n\varepsilon/2}\geq M^{n(\alpha-\varepsilon)}. \label{e:t:en}
\end{equation}

Set $n_1= 1$ and $n_{k+1}=n_k+i_{n_k}$ for all $k \geq 1$.
Let $E^{\prime} = \bigcup^\infty_{k=1} E_{n_k}$.
We show that $\dim^{*} E^{\prime} \geq \alpha-\varepsilon.$ For any $k\geq 1$, by (\ref{e:t:en}), we have \[H_{M^{n_k}}^*(E^{\prime},\D) \geq H_{M^{n_k}}^*(E_{n_k},\D)\geq M^{n_k(\alpha-\varepsilon)}.\] Thus $\dim^{*} E^{\prime} \geq \alpha-\varepsilon$.

On the other hand, for any $Q\in \D_M$ there exists $n_k\leq \ell < n_{k+1}$ such that $Q\in \D_M(l)$ and 
\[
E'=(\cup^k_{j=1} E_{n_j}) \cup (\cup^\infty_{j=k+1} E_{n_j})\subset (\cup^k_{j=1} E_{n_j}) \cup Q_{n_{k+1}}. 
\] 
Observe that
\[
N_{M}^*(\cup_{j=1}^{k}E_{n_j},Q)\leq N
\] 
and 
\[
N_{M}^*(\cup_{j=k+1}^{\infty}E_{n_j},Q)\leq N_{M}^*(Q_{n_{k+1}},Q)\leq 3^{d}.
\] 
Thus 
\begin{equation}\label{induct}
N_{M}^*(E^{\prime},Q) \leq N+3^{d}\leq M^{\alpha+\varepsilon},
\end{equation}
the last estimates holds when $M$ is large.
and hence $\dim^{*} E^{\prime} \leq \alpha+\varepsilon.$
\end{proof}

\begin{lemma}\label{lem:unboundedforlocal}
Let $E\subset \mathbb{R}^{d}$ be an unbounded set with $\dim^{*} E=s$ and for any $R>0$, $\dim^{*} (E \cap B(0,R))<s$. Then for any $\alpha \in (0,s)$ and $\varepsilon>0$ there exists an $F\subset E$ such that $\dim^{*} F\in (\alpha-\varepsilon, \alpha+\varepsilon)$.
\end{lemma}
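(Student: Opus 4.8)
The plan is to reduce \emph{Lemma \ref{lem:unboundedforlocal}} to the bounded case already handled in \emph{Lemma \ref{lem:dense}}, by exploiting the hypothesis that all the ``mass'' of the local star dimension $s$ escapes to infinity. Since $\dim^* E = s$, for every $p$ there are arbitrarily large cubes in which $E$ achieves the near-optimal count $p^{s-o(1)}$; but since $\dim^*(E\cap B(0,R)) < s$ for every $R$, these near-optimal cubes must sit further and further out as $p\to\infty$. First I would make this precise: choose a sequence of scales $p_k = M_k$ together with cubes $Q_k$ (of some small side length, in some $\mathcal{D}_{M_k}$) such that $N_{M_k^{n_k}}(E, Q_k)$ is large (roughly $M_k^{n_k s}$, as in \eqref{e:thm1:1}) and such that the $Q_k$ are pairwise disjoint and ``spread out'' — a point that is automatic because any fixed ball $B(0,R)$ contains only sets of local star dimension $< s$, so it cannot host the optimal counts for large $k$.

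Next, on each block $Q_k$ I would run the construction of \emph{Lemma \ref{lem:dense}} (equivalently, apply \emph{Lemma \ref{lem:rc}} directly to the collection $K_k$ of small cubes inside $Q_k$ that meet $E$), obtaining a finite set $F_k \subset E \cap Q_k$ with $H^*_{M_k^{n_k}}(F_k, \mathcal{D}) \geq M_k^{n_k(\alpha-\varepsilon)}$ and with the upper control $N^*_{M_k}(F_k, Q) \leq \lfloor M_k^\alpha\rfloor$ for every cube $Q$ of side length larger than that of the cubes making up $F_k$. Set $F = \bigcup_{k\geq 1} F_k$. The lower bound $\dim^* F \geq \alpha - \varepsilon$ is then immediate, since within the single block $Q_k$ we already see the count $M_k^{n_k(\alpha-\varepsilon)}$ at scale ratio $M_k^{n_k}$, and $M_k \to \infty$. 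For the upper bound $\dim^* F \leq \alpha + \varepsilon$, the key observation is that for any cube $Q$ of side length $\leq 1$, at most \emph{one} block $Q_k$ can be ``relevant'' at the scale of $Q$: the blocks $Q_k$ are pairwise disjoint and chosen small and spread out, so if $Q$ is comparable to or smaller than $Q_k$ it can meet at most boundedly many of the $Q_j$, and inside the one relevant block the per-cube count is controlled by $\lfloor M_k^\alpha \rfloor$ exactly as in \eqref{induct}; the contribution of all other blocks to $N^*_M(F,Q)$ is then $O(1)$.

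The main obstacle I expect is the bookkeeping that makes ``at most one block is relevant'' actually true while simultaneously keeping the $F_k$ inside $E$ and keeping their counts optimal — i.e., choosing the side lengths and the locations of the $Q_k$ consistently. Concretely, the side length of the cubes constituting $F_k$ shrinks like $M_k^{-n_k}$, while the block $Q_k$ itself has some side length $\ell_k$; to ensure that a generic dyadic-type cube $Q$ of side $\leq 1$ straddles at most one block, I would place the $Q_k$ so that the gaps between consecutive blocks dominate the diameters of the blocks and so that $\ell_k \to 0$, using the unboundedness of $E$ and the fact that $\dim^*(E\cap B(0,R))<s$ to guarantee that such a choice of far-out, small blocks with optimal counts exists. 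Once that geometric separation is arranged, the estimates are essentially a repetition of those in \emph{Lemma \ref{lem:dense}}: the lower bound uses one block, the upper bound uses the separation plus \emph{Lemma \ref{lem:rc}}. A secondary technical point is passing between balls and cubes and between covering and packing numbers, but this is handled by \emph{Lemma \ref{lem:ballandcubes}} and \emph{Lemma \ref{lem:lowereq}}.
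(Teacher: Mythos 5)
Your outline is correct, but it takes a genuinely different route from the paper, and in fact a more robust one. The paper disposes of this lemma in three lines: it asserts the identity $\dim^{*}(E)=\sup_{Q\in\mathcal{Q}_1}\dim^{*}(E\cap Q)$, picks a single cube $Q$ with $\dim^{*}(E\cap Q)\geq\alpha$, and applies Lemma \ref{lem:dense} inside that cube. Your construction instead strings together infinitely many blocks escaping to infinity; it is essentially the strategy of Lemma \ref{lem:unboundedforgloble} transplanted to the local dimension, where it is actually easier, because only cubes of side length at most $1$ enter the definition of $\dim^{*}$: a fixed separation between blocks, depending only on $d$, already guarantees that each test cube meets at most one block, so nothing like the telescoping condition \eqref{eq: condition11} is needed. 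Your extra work is not wasted: the identity the paper relies on is false in general. Take $E=\bigcup_{n}E_{n}\subset\mathbb{R}$, where $E_{n}$ consists of the $2^{n}$ left endpoints of the level-$n$ triadic intervals of the middle-thirds Cantor set, translated into $[2n,2n+1]$. Every $E\cap Q$ with $Q\in\mathcal{Q}_1$ is then finite, so the supremum on the right-hand side is $0$, while $H^{*}_{3^{n}}(E,\mathcal{Q}_1)\geq 2^{n}$ forces $\dim^{*}E\geq\log 2/\log 3$; this $E$ satisfies the hypotheses of the lemma. For such sets one must spread the construction over infinitely many far-out blocks exactly as you propose, and the lemma is then proved by your argument.

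One point you should pin down: keep the subdivision base fixed. Write the near-optimal counts as $N_{M^{n_k}}(E,Q_k)\geq M^{n_k s}$ with a single $M$ (chosen so that $H^{*}_{M}(E,\mathcal{Q}_1)\leq M^{s+\varepsilon/2}$ and $\lfloor M^{\alpha}\rfloor\geq M^{\alpha-\varepsilon/2}$), letting only the depth $n_k$ vary, as in \eqref{e:thm1:1}. The reason is that the upper bound $\dim^{*}F\leq\alpha+\varepsilon$ comes from $\dim^{*}F=\inf_{p}\log H^{*}_{p}(F,\mathcal{Q}_1)/\log p\leq\log H^{*}_{M}(F,\mathcal{Q}_1)/\log M$, which requires one value of $p$ controlling all test cubes simultaneously, as in \eqref{induct}. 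If each block were built with its own base $M_k$, the bound $N_{M_k}(\widetilde{F}_k,Q)\leq\lfloor M_k^{\alpha}\rfloor$ from Lemma \ref{lem:rc} would give no control on $N_{M_1}(\widetilde{F}_k,Q)$ for large $k$, and the per-block estimates would not combine into a bound on any single $H^{*}_{p}$. With a fixed $M$ the lower bound still follows from $H^{*}_{M^{n_k}}(F,\mathcal{Q}_1)\geq M^{n_k(\alpha-\varepsilon)}$ along the subsequence $p=M^{n_k}$, since the limit defining $\dim^{*}$ exists.
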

\begin{proof}
By definition of $\dim^{*}$, we have 
$$\dim^{*}(E)=\sup_{Q\in \mathcal{Q}_1}\dim^{*}(E\cap Q).$$
Thus for any $0<\alpha<s$, there exists $Q\in \mathcal{Q}_1$ such that $\dim^{*}(E\cap Q)\geq\alpha$.
By Lemma \ref{lem:dense}, there is $F\subset E\cap Q$ with $\dim^{*} F\in (\alpha-\varepsilon, \alpha+\varepsilon)$.
\end{proof}

\begin{lemma}\label{lem:unboundedforgloble}
Let $E\subset \mathbb{R}^{d}$ be unbounded with $\dim E=s$ and for any $R>0$, $\dim (E \cap B(0,R))<s$. Then for any $\alpha\in (0,s)$ and  $\varepsilon>0$ there exists an $F\subset E$ such that $\dim F\in [\alpha-\varepsilon, \alpha+\varepsilon]$.
\end{lemma}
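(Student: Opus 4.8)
The plan is to mimic the proof of Lemma~\ref{lem:dense}, but now working with the global star dimension $\dim$ and global dyadic cubes $\mathcal{Q}$ rather than $\mathcal{Q}_1$. The key structural input is that, since $E$ is unbounded with $\dim E = s$ but $\dim(E\cap B(0,R))<s$ for every $R$, the "full-dimensional" behaviour of $E$ escapes to infinity: there is a sequence of cubes $Q_n\in\mathcal{D}_M(-k_n)$ of ever larger side length $M^{k_n}$ (i.e. large-scale cubes) with $\dim(E\cap Q_n)$ close to $s$, or more precisely with many level-$(k_n+n)$ subcubes meeting $E$. Concretely I would first record the analogue of the "local stability" fact used in Lemma~\ref{lem:dense}: $\dim E=\sup_{Q\in\mathcal{Q}}\dim(E\cap Q)$, so for any target $\alpha\in(0,s)$ we may pass to a single cube $Q$ (of possibly large side length) with $\dim(E\cap Q)\ge\alpha$. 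Then I would apply Lemma~\ref{lem:dense} — rescaled so that $Q$ becomes the unit cube — to $E\cap Q$ to extract $F\subset E\cap Q\subset E$ with $\dim^{*}F\in[\alpha-\varepsilon,\alpha+\varepsilon]$, and finally upgrade this to $\dim F\in[\alpha-\varepsilon,\alpha+\varepsilon]$.

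The one genuine subtlety, and the reason this is stated as a separate lemma rather than being an immediate corollary of Lemma~\ref{lem:dense}, is the discrepancy between $\dim^{*}$ and $\dim$ for the set $F$ we produce: Lemma~\ref{lem:dense} only controls the \emph{local} star dimension, whereas here we want the \emph{global} one. So the plan is to build $F$ by a two-scale construction. First, using that the full dimension of $E$ escapes to infinity, choose a sparse increasing sequence of scales and, within annuli $B(0,R_{j+1})\setminus B(0,R_j)$ for suitable $R_j\to\infty$, place pieces $E_j$ that individually have local star dimension about $\alpha$ (obtained from Lemma~\ref{lem:dense} applied inside a cube of side $\approx R_j$ sitting in that annulus), and whose mutual separation grows fast enough that at \emph{large} scales $R$ a ball $B(x,R)$ sees essentially only one such $E_j$ together with a bounded-size remainder. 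This is exactly the mechanism already used in the last paragraph of the proof of Lemma~\ref{lem:dense}, where the tail $\bigcup_{j\ge k+1}E_{n_j}$ is absorbed into a single cube $Q_{n_{k+1}}$ contributing only $3^d$ to the packing count; the same absorption, run in the global-cube setting, will give the upper bound $\dim F\le\alpha+\varepsilon$, while the lower bound $\dim F\ge\alpha-\varepsilon$ is immediate from $\dim^{*}F\le\dim F$ once we know $\dim^{*}F\ge\alpha-\varepsilon$ from the local estimate on a single $E_j$.

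More precisely, for the lower bound I would note that each $E_j$ already satisfies, at its own working scale $M^{n_j}$, an estimate of the form $H^{*}_{M^{n_j}}(E_j,\mathcal{D})\ge M^{n_j(\alpha-\varepsilon)}$ coming from Lemma~\ref{lem:rc}; since $E_j\subset F$, the global star dimension $\dim F$ is at least $\alpha-\varepsilon$ (a fortiori, since $\dim\ge\dim^{*}$, the local bound already suffices). For the upper bound I would fix an arbitrary cube $Q\in\mathcal{Q}$ of side length $M^{-\ell}$ (allowing $\ell<0$, i.e.\ large cubes), locate the index $k$ with $n_k\le \ell<n_{k+1}$ in the chosen scale sequence, split $F=(\bigcup_{j\le k}E_{n_j})\cup(\bigcup_{j>k}E_{n_j})$, bound the first part's contribution to $N_M^{*}(F,Q)$ by $N=\lfloor M^\alpha\rfloor$ (by the defining property of the sets produced by Lemma~\ref{lem:rc}), and bound the tail's contribution by a dimensional constant using that, for $R$ large, everything beyond scale $R_{k+1}$ fits into $O(1)$ of the $M$-subcubes of $Q$ — this is where the fast growth of the radii $R_j$ is used. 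Adding these gives $N_M^{*}(F,Q)\le N+C_d\le M^{\alpha+\varepsilon}$ for $M$ large, hence $\dim F\le\alpha+\varepsilon$.

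The main obstacle I anticipate is making the "escape to infinity" quantitative in a way compatible with the global cube count: one must choose the annuli $B(0,R_{j+1})\setminus B(0,R_j)$, the cubes $I_j$ inside them on which $E$ has $\approx\alpha$-dimensional local behaviour, and the side lengths of those cubes, so that (i) each $I_j$ is large enough that Lemma~\ref{lem:dense} can be applied inside it to realise a local star dimension in $(\alpha-\varepsilon,\alpha+\varepsilon)$, yet (ii) the $R_j$ grow geometrically fast enough (say $R_{j+1}\ge M^{n_{j+1}}R_j$ or similar) that for every global scale $R$ the portion of $F$ at distance $\gtrsim R$ from a given centre is contained in boundedly many $M$-subcubes. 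Balancing these two requirements is the crux; once the scale bookkeeping is set up, the probabilistic selection of Lemma~\ref{lem:rc} and the covering/packing comparisons of Section~\ref{section Preliminary} do the rest, exactly as in Lemma~\ref{lem:dense}.
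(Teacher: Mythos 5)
Your overall architecture is the same as the paper's: exploit that the full dimension of $E$ ``escapes to infinity'' to find cubes $Q_k$ far from the origin carrying a near-maximal count at a single resolution, thin each one out with Lemma~\ref{lem:rc}, spread the resulting pieces so far apart that at every scale a ball sees essentially one piece plus an absorbable remainder, and read off the two bounds. However, two steps of your plan have genuine gaps. First, the identity $\dim E=\sup_{Q\in\mathcal{Q}}\dim(E\cap Q)$, and with it your proposal to apply Lemma~\ref{lem:dense} inside a single cube $I_j$ ``on which $E$ has $\approx\alpha$-dimensional local behaviour'', is not available here: the maximizing cube in $H_p^*(E,\mathcal{Q})$ depends on $p$, and under the very hypotheses of this lemma there may be \emph{no} bounded cube with $\dim(E\cap Q)\geq\alpha$. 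For instance, $E=\bigcup_n\{n+j4^{-n}:0\le j<2^n\}\subset\R$ has $\dim E=1$ while $E\cap Q$ is finite, hence of dimension $0$, for every bounded cube $Q$. The correct substitute, which the paper uses, is purely single-scale: from $\dim(E\setminus B(0,R))=s$ (finite stability plus the hypothesis $\dim(E\cap B(0,R))<s$) one extracts, for each $k$, one cube $Q_{k+1}$ disjoint from $B(0,R_k+\ell_k)$ and one resolution $n_{k+1}$ with $N_{M^{n_{k+1}}}(E,Q_{k+1})\geq M^{n_{k+1}(s-\varepsilon)}$, and feeds this directly into Lemma~\ref{lem:rc}; Lemma~\ref{lem:dense} is never invoked and no cube is required to have large dimension.

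Second, your upper-bound bookkeeping is not correct as stated. The bound ``first part contributes at most $N$ by the defining property of Lemma~\ref{lem:rc}'' only controls $N_M(F_j,Q')$ for subcubes $Q'$ of the single ambient cube $Q_j$; it says nothing about a large cube (or ball) containing several of the pieces $F_1',\dots,F_k'$, whose ambient cubes $Q_i$ may have wildly different diameters. The paper resolves exactly this point not by fitting the inner pieces into $O(1)$ subcubes, but by imposing the summability condition \eqref{eq: condition11}, $\sum_{i=1}^{k}\diam(Q_i)^{\alpha+\varepsilon}\leq\ell_k^{\alpha+\varepsilon}$, on the separation radii, and then, for a ball $B(x,R)$ with $\ell_k\le R<\ell_{k+1}$, summing the \emph{full} covering numbers $N_r(F_i')\le C(\diam(Q_i)/r)^{\alpha+\varepsilon}$ of the inner pieces and absorbing the total into $C(R/r)^{\alpha+\varepsilon}$ since $R\ge\ell_k$; the complementary case, where $x$ lies in a piece $F_n'$ with $n\ge k+2$, is the one where the ball meets only that single piece. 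You correctly identified this balancing act as ``the crux'', but the specific mechanism you propose (a uniform bound by $N$ plus a $3^d$-type remainder, carried over from the nested construction of Lemma~\ref{lem:dense}) does not transfer to the non-nested, spatially spread configuration; the condition \eqref{eq: condition11} is the missing ingredient.
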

\begin{proof}
Since $\dim E=s$, there exists $M \in \N$ such that for any $p \geq M$ 
\[
p^{s-\varepsilon} \leq H_p^* (E,\Q) \leq p^{s+\varepsilon}.
\]
Thus there exists $Q_1 \in \Q$ such that $N_M(E,Q_1)\geq M^{s-\varepsilon}$. By Lemma \ref{lem:rc} there exists a finite set $F_1 \subset Q_1$ such that 
\[
 M^{\alpha-\varepsilon}\leq N_M(F_1,Q_1) \leq M^{\alpha+\varepsilon}.
\]
From each cube $Q$ in $F_{1}$, we arbitrarily choose one point in $Q\cap E$  and denote the union of such points by $F_{1}'$.
Given  $\{F_i'\}^k_{i=1}$ and $\{Q_i\}^{k}_{i=1}$, we construct $F_{k+1}'$ as follows. Let  $R_{k}=\sup_{x\in Q_k}{\rm dist}(0,x)$ and $\ell_{k}$ be a positive number  satisfying   
\begin{equation}\label{eq: condition11}
\sum_{i=1}^{k} \diam(Q_{i})^{\alpha+\varepsilon} \leq \ell_{k}^{\alpha+\varepsilon}.
\end{equation}
In addition, we require that $\ell_1 < \ell_2 <\cdots < \ell_k.$

Since $\dim (E \backslash B(0, R_{k}+\ell_{k}))=s,$
there exists $Q_{k+1}$ and $n_{k+1}$ such that 
\[
N_{M^{n_{k+1}}}(E,Q_{k+1})\geq M^{n_{k+1}(s-\varepsilon)}
\]
and $Q_{k+1} \cap B(0, R_k+\ell_{k})=\emptyset$. By Lemma \ref{lem:rc} there exists $F_{k+1} \subset Q_{k+1}$ such that for any $Q \subset Q_{k+1}$ and $Q_{k}\in \mathcal{D}(k+2)$
\begin{equation}\label{eq:up}
N_M(F_{k+1},Q) \leq M^{\alpha+\varepsilon}. 
\end{equation}
Furthermore, we have
\begin{equation}
N_{M^{n_{k+1}}}(F_{k+1},Q_{k+1})\geq M^{n_{k+1}(\alpha-\varepsilon)}.
\end{equation}
From each cube $Q$ in $F_{n+1}$, we arbitrarily choose one point in $Q\cap E$  and denote the union of such points by $F_{n+1}'$.
Define $F=\bigcup_{k\in \N} F_k'.$ It is clear that $\dim F\geq \alpha-\varepsilon.$ In the following we intend to show $\dim  F \leq \alpha+\varepsilon.$

For any $x \in F$  and $r<R$, there exist $k, n$ such that $\ell_k \leq R<\ell_{k+1}$ and $x\in F_n'$. We have two cases:

{\bf Case 1.} $n\geq k+2$.  In this case the ball $B(x,R)$ does not intersect $F_{j}'$ for any $j\neq n$, so by \eqref{eq:up} we have 
\[
N_r(F \cap B(x,R))= N_r(F_{n}' \cap B(x,R))  \leq C (R/r)^{\alpha+\varepsilon}.
\]

{\bf Case 2.}  $n< k+2$. In this case, the ball $B(x,R)$ does not intersect $F_j'$ for $j\geq k+2$. So we have
\begin{align*}
N_r(F \cap B(x,R))  &\leq \sum_{i=1}^{k}
N_r( F_i')+N_r( F_{k+1}'\cap B(x,R))\\
& \leq  \sum_{i=1}^{k}
N_r( Q_i)+N_r( F_{k+1}'\cap B(x,R))\\
&\leq C\sum^{k}_{i=1} \left(\dfrac{\diam(Q_i)}{r}\right)^{\alpha+\varepsilon} +C\left(\dfrac{R}{r}\right)^{\alpha+\varepsilon}\\
&\leq 2C \left(\frac{R}{r}\right)^{\alpha+\varepsilon},
\end{align*}
where  the last inequality holds due to  the condition \eqref{eq: condition11} and the fact $\ell_k \leq R$.
Thus we complete the proof.
\end{proof}

It is clear that the star dimension is not countable stable. For example let $A_n=\{1/k\}^n_{k=1},$ then 
\[
0=\sup_{n\in \N} \dim^{*} A_n <\dim^{*} \left(\bigcup_{n\in \N} A_n\right)=1.
\]
However, we have the following easy fact. For convenience we put it as a lemma.
\begin{lemma}\label{lem:twosidesapproch}
Let $A_n \subset A_{n+1}, B_{n+1}\subset B_n$ for all $n\in \N$ and $\bigcup_{n\geq 1} A_{n}\subset\bigcap_{n\geq 1} B_{n}$. Then 
\begin{equation}
\sup_{n\in \N} \dim^{*} A_n\leq \dim^{*}\left( \bigcup^\infty_{n=1} A_n\right)\leq \dim^{*} \left(\bigcap^{\infty}_{n=0} B_n\right) \leq \inf_{n\in \N} \dim^{*} B_n.
\end{equation}
The above formula also holds for $\dim$.
\end{lemma}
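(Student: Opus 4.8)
The entire statement reduces to the monotonicity of the star dimension under set inclusion, so the plan is first to record that fact and then apply it three times along the given chain of inclusions. For monotonicity, I would argue directly from the definition: if $E\subset E'$, then for every cube $Q\in\mathcal{Q}$ and every integer $p\geq 2$ one has $N_p(E,Q)\leq N_p(E',Q)$, since every sub-cube of $Q$ meeting $E$ also meets $E'$. Taking the maximum over $Q\in\mathcal{Q}_1$ gives $H_p^*(E,\mathcal{Q}_1)\leq H_p^*(E',\mathcal{Q}_1)$, and hence, after taking $\log(\cdot)/\log p$ and then $\inf_{p>1}$ in the expression \eqref{e:sd}, we obtain $\dim^* E\leq \dim^* E'$. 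The identical computation with $\mathcal{Q}_1$ replaced by $\mathcal{Q}$ yields $\dim E\leq \dim E'$.

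Next I would instantiate this three times. First, for each fixed $n\in\N$ we have $A_n\subset \bigcup_{m\geq 1}A_m$, so $\dim^* A_n\leq \dim^*\bigl(\bigcup_{m\geq 1}A_m\bigr)$; taking the supremum over $n$ gives the first inequality. Second, the hypothesis $\bigcup_{m\geq 1}A_m\subset \bigcap_{m\geq 1}B_m$ together with monotonicity gives the middle inequality $\dim^*\bigl(\bigcup_{m\geq 1}A_m\bigr)\leq \dim^*\bigl(\bigcap_{m\geq 1}B_m\bigr)$ at once. Third, for each fixed $n$ we have $\bigcap_{m\geq 1}B_m\subset B_n$, so $\dim^*\bigl(\bigcap_{m\geq 1}B_m\bigr)\leq \dim^* B_n$; taking the infimum over $n$ gives the last inequality. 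Concatenating the three, we obtain the asserted chain. The same three steps applied to $\dim$ in place of $\dim^*$ give the final assertion.

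There is no real obstacle here: the nestedness hypotheses $A_n\subset A_{n+1}$ and $B_{n+1}\subset B_n$ are not even needed for the displayed chain (only the inclusions $A_n\subset\bigcup_m A_m$, $\bigcup_m A_m\subset\bigcap_m B_m$, and $\bigcap_m B_m\subset B_n$ are used), and the lone point to check --- monotonicity of $\dim^*$ and $\dim$ --- is immediate from the definitions. I would present it exactly as above, in a few lines.
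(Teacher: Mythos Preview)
Your proposal is correct and matches the paper's treatment: the paper records this lemma as an ``easy fact'' without giving any proof, and the monotonicity-of-$\dim^{*}$ (and $\dim$) argument you outline is exactly the routine verification the authors are leaving to the reader. Your observation that the nestedness hypotheses are not actually used is accurate as well.
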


Applying Lemma \ref{lem:unboundedforlocal} and Lemma \ref{lem:twosidesapproch}, we intend to prove Theorem \ref{Assouad} for $\dim^{*}$ and $\dim$.

\begin{proof}[Proof of Theorem \ref{Assouad} for $\dim^{*}$]
Let $\alpha \in (0,s)$. Choose two sequences $\{a_n\}_{n\in \N}$ with $a_n \nearrow \alpha$ and $\{b_n\}_{n\in \N}$ with $b_n \searrow s$. Let 
\[
 (0,\alpha)= \bigcup_{n\in \N} I_n  \text{ and } (\alpha, s)=\bigcup_{n\in \N} J_n,
\]
where $I_n=(a_n, a_{n+1}]$, and $ J_n=[b_{n+1},b_n)$. For $n=1$, by Lemma \ref{lem:unboundedforlocal} there is $B_1 \subset E$ with $\dim B_1 \in J_1$, and $A_1 \subset B_1$ with $\dim^* A_1 \in I_1$. Given $A_n, B_n$ with $A_n \subset B_n$ and $\dim^* A_n \in I_n, \dim^* B_n \in J_n$. We intend to construct $A_{n+1}, B_{n+1}$. For $B_n$ and $J_{n+1}$, by Lemma  \ref{lem:unboundedforlocal}, there is $B_n' \subset B_n $ with $\dim^* B_n' \in J_{n+1}$. For $B_n'$ and $I_{n+1}$, by Lemma \ref{lem:unboundedforlocal}, there is $A_n'\subset B_n'$ with $\dim^* A_n' \in I_{n+1}$. Let 
\begin{equation}
A_{n+1}=A_n \cup A_n', B_{n+1} = A_n \cup B_n'.
\end{equation}
In the end  we have two sequences $\{A_n\}, \{B_n\}$ with  $$A_n \subset A_{n+1}\subset B_{n+1}\subset B_n$$ and $\dim^* A_n \in I_n, \dim^* B_n \in J_n$ for all $n\in \N$. By lemma \ref{lem:twosidesapproch}, we have $\dim^* \left(\cup_n A_n\right) = \dim^* \left(\cap_n B_n\right)=\alpha$.
\end{proof}

\begin{proof}[Proof of Theorem \ref{Assouad} for $\dim$]
Applying Lemma \ref{lem:unboundedforgloble}, Lemma \ref{lem:twosidesapproch} and the same argument as in the previous proof, we complete the proof.
\end{proof}

\section{Proof of Theorem \ref{Assouad} for the lower dimension}\label{section, continuity property of lower dimension}

\begin{proof}[Proof of Theorem \ref{Assouad} for $\dim_L$]
Suppose $E\subset \mathbb{R}^{d}$ with $\dim_{L}E=s$. For any $\alpha\in (0,s)$, we will construct a subset $F$ which satisfies $\dim_{L}F=\alpha$ as follows.\\
{\bf Step 1.} Let $\varepsilon\in(0,s-\alpha)$. Then by Lemma \ref{lem:lowereq}, there exist $C,\rho>0$ such that for any $0<r<R<\rho$,
\[N^{*}_{r}(E\cap B(x,R))\geq C\left(\frac{R}{r}\right)^{s-\varepsilon}.\]

Let $M$ be a large enough integer satisfying $$CM^{\frac{s-\varepsilon}{\alpha}}\geq M+3^{d}.$$ Fix $0<R_0<\rho$. Let $R_{1}=\lambda R_{0}$ where $\lambda^{\alpha}M=1$. Then for all  $x\in E$, 
\[N^{*}_{R_{1}}(E\cap B(x,R_{0}))\geq C\lambda^{-s+\varepsilon}=CM^{\frac{s-\varepsilon}{\alpha}}\geq M+3^{d}.\] 
Fix any $x\in E$. Since there are at most $3^{d}$ disjoint balls of radius $R_{1}$ that touch $B(x,R_{1})$, we can pick $M$ disjoint balls including $B(x,R_{1})$ from the $N^{*}_{R_{1}}(E\cap B(x,R_{0}))$ packing balls. Denote them by $\{B(x_{i},R_{1})\}_{i=1}^{M}$ where $x_{1}=x$. Let $\mathcal{A}:=\{1,\dots, M\}$ and $$E_{1}=\bigcup_{i\in\mathcal{A}}E\cap B(x_{i},R_{1}).$$ 
{\bf Step 2.} Let $R_{2}=\lambda R_{1}$. Repeating Step 1 for each $i\in\mathcal{A}$, we choose $M$ balls including $B(x_{i}, R_{2})$ from $N^{*}_{R_{2}}(E\cap B(x_{i},R_{1}))$ packing balls. Denote them by $B(x_{\mathbf{i}}, R_{2})$ where $\mathbf{i}=i_{1}i_{2}\in \mathcal{A}^{2}$ and $x_{i1}=x_{i}$ for all $i\in\mathcal{A}$. Let \[E_{2}=\bigcup_{\mathbf{i}\in \mathcal{A}^{2}}E\cap B(x_{\mathbf{i}},R_{2}).\]
{\bf Step 3.} Repeating previous process $n$ times, we define \[E_{n}=\bigcup_{\mathbf{i}\in\mathcal{A}^{n}}E\cap B(x_{\mathbf{i}},R_{n})\]
and \[F=\bigcap_{n\geq 1}E_{n}.\] 
This process ensures that
\begin{enumerate}
\item $R_{k+1}=\lambda R_{k}$ for all $k\geq 1$;
\item $\{B(x_{\mathbf{i}},R_{k})\}_{\mathbf{i}\in\mathcal{A}^{k}}$ are disjoint balls for all $k\geq 1$;
\item $B(x_{\mathbf{i}j},R_{k+1})\subset B(x_{\mathbf{i}},R_{k})$ for all $\mathbf{i}\in\mathcal{A}^{k}, j\in\mathcal{A}$ and $k\geq 1$;
\item $x_{\mathbf{j}1}=x_{\mathbf{j}}$ for all $\mathbf{j}\in\mathcal{A}^{k}$. Thus $x\in F\neq \emptyset$.
\end{enumerate} 
By definition of lower dimension, it is not hard to see that the lower dimension of a bounded set is always less than its lower box dimension.
Therefore, \[\dim_{L}F\leq \underline{\dim}_{B}F\leq \lim_{n\rightarrow\infty}\frac{\log M^{n}}{-\log(R_{0}\lambda^{n})}=\alpha.\]

In the following, we will show that $\dim_{L}F\geq \alpha$. Let $C_{M}=\frac{1}{M+1}$. For any $0<r<R\leq R_{0}$, we have \[R_{n+1}<R\leq R_{n} \textrm{ and } R_{n+k+1}<r\leq R_{n+k}\]
for some $n\geq 1$ and $k\geq 0$.  When $k=0$, $R/r<R_{n}/R_{n+1}=1/\lambda$. Thus by the definition of $F$, we know that for any $x\in F$,
\begin{align*}
N_{r}^{*}(F\cap B(x,R)) & \geq 1 \\
& \geq C_{M}M = C_{M}\frac{1}{\lambda^{\alpha}}\\
& \geq C_{M}\left(\frac{R}{r}\right)^{\alpha}.
\end{align*}
For the same reason, when $k\geq 1$, we have 
\begin{align*}
N_{r}^{*}(F\cap B(x,R)) & \geq N_{r}^{*}(F\cap B(x,R_{n+1}))\\
& \geq N_{R_{n+k}}^{*}(F\cap B(x,R_{n+1}))\\
& \geq M^{k-1}=\lambda^{-\alpha(k-1)}=\left(\frac{R_{n+1}}{R_{n+k}}\right)^{\alpha}\\
& \geq C_{M}\left(\frac{R_{n+1}}{R_{n+k}}\right)^{\alpha}.
\end{align*}
Now we conclude that $\dim_{L}F\geq \alpha$ which completes the proof.
\end{proof}

\section{Further remarks}\label{section remarks-example}

The equality $\dim_A E =\dim^{*}_A E$ holds for any bounded set $E$ in Euclidean space. However this is not always true in general metric space. For example for any infinite set $X$, given the discrete metric on $X$, that is any two different points has distance one, thus 
\[
0=\dim^{*}_A E<\dim_A E =\infty.
\]

For unbounded sets (even in Euclidean spaces), these two dimensions can be different.
\begin{proposition}
For any $0\leq \alpha < \beta \leq d$ there exists a subset $E \subset \mathbb{R}^d,$ such that 
\[
\alpha=\dim^{*}_{A} E < \dim_A E =\beta.
\]
\end{proposition}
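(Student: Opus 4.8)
The plan is to construct $E$ as a disjoint union of two pieces living at separated scales: a piece $A$ whose local structure realizes dimension $\alpha$ (and which is bounded, so that $\dim_A^* A = \dim_A A = \alpha$), placed inside a unit cube, and a piece $B$ consisting of a sparse, large-scale arrangement of scaled copies of $A$ whose \emph{global} (star) dimension is $\beta$ but whose \emph{local} dimension stays $\le \alpha$. The point is that $\dim_A^*$ sees only what happens inside balls $B(x,R)$ with $R$ small (below some threshold $\rho$), whereas $\dim_A$ must control all pairs $0<r<R$; by spreading copies of $A$ out to infinity at geometrically increasing separations we can force many cubes at large scales to be hit (pushing $\dim_A$ up to $\beta$) while every small ball meets essentially only one copy of $A$ (keeping $\dim_A^*$ at $\alpha$).

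Concretely, first I would fix, using Lemma \ref{lem:ballandcubes} and the known existence of sets in $\R^d$ with prescribed Assouad dimension (e.g.\ a self-similar Cantor-type set, or even just invoking Theorem \ref{Assouad} applied to $[0,1]^d$ to extract a subset of $[0,1]^d$ of dimension $\alpha$), a compact set $A\subset [0,1)^d$ with $\dim_A^* A = \dim_A A = \alpha$. Next, choose an integer $M$ and an exponent so that at scale $M^{-n}$ the set $A$ is hit by about $M^{n\alpha}$ of the $M$-adic subcubes, up to subexponential factors; this is exactly the content of $\dim A = \alpha$ read through $H_{M^n}^*(A,\mathcal{D}_M)$. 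Then I would pick a rapidly increasing sequence of integers $N_1 < N_2 < \cdots$ and translation vectors $v_k$ with $|v_k|$ growing like $M^{N_k}$, and set $B = \bigcup_k \big(M^{-?}\!\cdot(\text{a copy of }A) + v_k\big)$ — more precisely, the $k$-th copy is a scaled-down image of $A$ of diameter comparable to $M^{-N_k}$ sitting at distance $\sim M^{N_k}$ from the origin, so that inside any cube of side $M^{-j}$ at most one copy of $B$'s pieces is visible. Finally set $E = A \cup B$.

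The verification splits into two inequalities for each of the two dimensions. For $\dim_A^* E = \alpha$: any ball $B(x,R)$ with $R<\rho$ (for $\rho$ small, depending on how sparsely the $v_k$ are placed) meets at most one of the copies comprising $B$, together possibly with $A$ itself, and each copy is a bi-Lipschitz image of a subset of $A$; since $\dim_A^* A = \alpha$ and $\dim_A^*$ is bi-Lipschitz invariant and finitely stable, the covering number $N_r(E\cap B(x,R))$ is bounded by a constant times $(R/r)^{\alpha}$, giving $\dim_A^* E \le \alpha$, while $A\subset E$ gives $\ge \alpha$. For $\dim_A E$: the upper bound $\dim_A E \le \beta$ comes from choosing the scalings of the copies of $A$ so that at every scale the number of $M$-adic cubes meeting $E$ inside any (possibly large) cube $Q$ is at most $C(R/r)^{\beta}$ — this forces the growth rate of the $N_k$ and the diameters of the copies; the lower bound $\dim_A E \ge \beta$ comes from exhibiting, for each $p$, a large cube $Q$ containing many of the copies of $A$ such that $N_p(E,Q) \gtrsim p^{\beta - \varepsilon}$, which one arranges by making the copies occupy a $\beta$-dimensional-looking pattern at the relevant scale (e.g.\ the $v_k$ chosen so that at scale $M^{-N}$ roughly $M^{N\beta}$ of the $M$-adic cubes of $[0,M^{N}]^d$-ish region are occupied).

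The main obstacle I anticipate is the simultaneous bookkeeping in the two bounds for $\dim_A E$: the sequence of scales $M^{N_k}$, the diameters $\delta_k$ of the copies, and their positions must be tuned so that (i) the copies are sparse enough that locally only one is ever seen (needed for $\dim_A^* E = \alpha$ and for the local contribution to $\dim_A$), yet (ii) globally they are \emph{dense enough} on the correct scales to push $\dim_A E$ all the way up to $\beta$ and not merely to something strictly between $\alpha$ and $\beta$, and (iii) not so dense that $\dim_A E$ overshoots $\beta$. Getting (ii) and (iii) to match exactly requires a careful choice — essentially arranging the copy-centers along a self-similar $\beta$-dimensional lattice pattern across a band of scales — and then estimating $N_p(E,Q)$ for an arbitrary cube $Q$ by decomposing $Q$ according to which scales of the construction it straddles, summing a geometric-type series, and checking the exponent is exactly $\beta$ up to the arbitrarily small $\varepsilon$. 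Once that combinatorial/geometric estimate is in place, the four inequalities follow routinely from Lemma \ref{lem:ballandcubes} and the finite stability and bi-Lipschitz invariance of the dimensions.
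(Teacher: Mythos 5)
Your overall strategy --- take the union of a bounded set of dimension $\alpha$ with an unbounded set whose local Assouad dimension is at most $\alpha$ but whose global Assouad dimension is $\beta$, and conclude by finite stability of $\dim_A$ and $\dim_A^*$ --- is exactly the strategy of the paper. The difference is in how the unbounded piece is produced. The paper simply applies Theorem \ref{Assouad} (already proved at that point) to $\mathbb{Z}^d$ to obtain $K_1\subset\mathbb{Z}^d$ with $\dim_A K_1=\beta$; since any subset of the integer lattice is uniformly discrete, $\dim_A^* K_1=0$ automatically, and the whole proof is three lines. You instead propose to build the unbounded piece by hand, and here there is a genuine gap: as literally described, your $B$ consists of one scaled copy of $A$ at each distance $\sim M^{N_k}$ from the origin along a rapidly increasing (lacunary) sequence. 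Such an arrangement has global Assouad dimension at most $\alpha$, not $\beta$: a ball $B(0,R)$ meets only $O(\log R)$ of the copies, each copy contributes at most $C(\mathrm{diam}/r)^{\alpha+\varepsilon}$ covering balls, and the resulting total is $o\bigl((R/r)^{\alpha+2\varepsilon}\bigr)$, so this $E$ would have $\dim_A E=\alpha<\beta$. You acknowledge the problem in your final paragraph and say the centres must instead form a ``self-similar $\beta$-dimensional pattern across a band of scales,'' but that is not a bookkeeping detail to be deferred: constructing an unbounded set with global Assouad dimension exactly $\beta$ and negligible local dimension is the entire content of the step, and it is precisely what Lemma \ref{lem:unboundedforgloble} (equivalently, Theorem \ref{Assouad} for $\dim_A$ applied to $\mathbb{Z}^d$) already delivers. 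Once you invoke that, your argument collapses to the paper's. Two further simplifications: the scaled copies of $A$ placed at the points $v_k$ are unnecessary --- single lattice points suffice and make $\dim_A^* B=0$ rather than $\alpha$, which is cleaner --- and the bounded piece can be taken directly from Theorem \ref{Assouad} applied to $[0,1]^d$, as you note.
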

\begin{proof}
By Theorem \ref{Assouad}, we can find two sets $K_{1}\subset\mathbb{Z}^{d}$ and $K_{2}\subset [0,1]^{d}$ with 
\[\dim_{A}K_{1}=\beta \text{ and }\dim^{*}_{A}K_{2}=\alpha.\]
Note that $\dim^{*}_{A}K_{1}=0$ and $\dim_{A}K_{2}=\alpha$. By the finite stability of $\dim_{A}$ and $\dim^{*}_{A}$, we have
\[\alpha=\dim_{A}^{*}(K_{1}\cup K_{2})<\dim_{A}(K_{1}\cup K_{2})=\beta.\]
This completes the proof.
\end{proof}

\medskip
\noindent{\bf Concluding remark.} After this work was essentially completed, we are informed that Theorem \ref{Assouad} for the (local) Assouad dimension $\dim_{A}^{*}$ is also obtained independently by W. Wang and S. Y. Wen in \cite{WW} at the same time. We are grateful to them for providing us their manuscript.

\medskip
\noindent{\bf Acknowledgement.} The authors are  grateful to DeJun Feng,  
Esa J\"arvenp\"a\"a, ChiuHong Lo and Ville Suomala  for many fruitful and interesting discussions related to this work.


\begin{thebibliography}{99}

\bibitem{Assouad1} P. Assouad. Espaces m\'etriques, plongements, facteurs, Th\`ese de doctorat d'Etat, Publ. Math. Orsay. Univ. Paris XI, Orsay, (1977).


\bibitem{Assouad2} P. Assouad. Etude d'une dimension m\'etrique li\'ee \`a la possibilit\'e de plongements dans $\R^n$,  {\em C. R. Acad. Sci. Paris} Sr. A-B, 288, (1979), 731-734.


\bibitem{Besicovitch} A. S. Besicovitch. On existence of subsets of finite measure of sets of infinite
measure,  {\em Indagationes Mathematicae}, {\bf 14} (1952), 339-44.

\bibitem{Davies}  R. O. Davies. Subsets of finite measure in analytic sets, {\em Indagationes Mathematicae}, {\bf 14} (1952), 488-489.

\bibitem{Falconer} K. J. Falconer. {\em Fractal Geometry: Mathematical Foundations and Applications}, John Wiley, 2nd
Ed., 2003.

\bibitem{FWW} D. J. Feng, Z. Y. Wen and J. Wu. Some remarks on the box-counting dimensions, {\em Progr. Natur. Sci. (English Ed.)} 9 (1999), no. {\bf 6}, 409-415. 


\bibitem{Fraser} J. M. Fraser. Assouad type dimensions and homogeneity of fractals, {\em Trans. Amer. Math. Soc.}  {\bf 366}  (2014),  no. 12, 6687-6733.

 
\bibitem{FHOR} J. M. Fraser, A. M. Henderson, E. J. Olson and J. C. Robinson.  On the Assouad dimension of
self-similar sets with overlaps, {\em Adv. Math.}, {\bf 273} (2015), 188-214.


\bibitem{FO} J. M. Fraser, T. Orponen.  The Assouad dimensions of projections of planar sets, {\em Preprint}, arXiv:1509.01128.



\bibitem{Furstenberg}  H. Furstenberg, {\em Ergodic theory and fractal geometry}, 
CBMS Regional Conference Series in Mathematics, {\bf 120}. American Mathematical Society, Providence, RI, 2014. 


\bibitem{Heinonen} J. Heinonen. {\em Geometric embeddings of metric spaces}, Rep. Univ. Jyv\"{a}skyl\"{a} Dept. Math.
Stat. (2003), no. Report {\bf 90}, University of Jyv\"{a}skyl\"{a}.



\bibitem{Joycepreiss} H. Joyce and D. Preiss. On the existence of subsets of finite positive packing measure. {\em Mathematika}  {\bf 42}  (1995),  no. 1, 15-24.


\bibitem{KLV} A. K\"aenm\"aki, J. Lehrb\"ack and M. Vuorinen. Dimensions, Whitney covers, and tubular neighborhoods, {\em Indiana Univ. Math. J.} 62 No. 6 (2013), 1861-1889

\bibitem{KR} A. K\"aenm\"aki and E. Rossi.   Weak separation condition, Assouad dimension, and Furstenberg homogeneity, 
{\em Ann. Acad. Sci. Fenn. Math.}, 41 (2016), no. 1, 465-490.

\bibitem{Larman} D. G. Larman. A new theory of dimension, {\em Proc. London Math. Soc.} (3), {\bf 17}, (1967), 178-192.

\bibitem{Luukkainen} J. Luukkainen. Assouad dimension: antifractal metrization, porous sets, and homogeneous measures,
{\em J. Korean Math. Soc.}, {\bf 35}, (1998), 23-76.



\bibitem{Mackay} J. M. Mackay. Assouad dimension of self-affine carpets, {\em Conform. Geom. Dyn.}, {\bf 15}, (2011), 177-187.

\bibitem{Mattila} P. Mattila. {\em Geometry of Sets and Measures in Euclidean Spaces}, Cambridge University Press, 1995.

\bibitem{WW} W. Wang and S. Y. Wen, An intermediate-value property for Assouad dimension of metric space, {\em Preprint}.

\end{thebibliography}
\end{document}